\documentclass[a4paper,12pt,reqno]{amsart}
\usepackage{amsfonts, color}
\usepackage{amsmath}
\usepackage{amssymb}
\usepackage[a4paper]{geometry}
\usepackage{mathrsfs}
\usepackage[colorlinks]{hyperref}
\usepackage{hyperref}
\renewcommand\eqref[1]{(\ref{#1})}

\makeatletter
\newcommand*{\mint}[1]{%
  \mint@l{#1}{}%
}
\newcommand*{\mint@l}[2]{%
  \@ifnextchar\limits{%
    \mint@l{#1}%
  }{%
    \@ifnextchar\nolimits{%
      \mint@l{#1}%
    }{%
      \@ifnextchar\displaylimits{%
        \mint@l{#1}%
      }{%
        \mint@s{#2}{#1}%
      }%
    }%
  }%
}
\newcommand*{\mint@s}[2]{%
  \@ifnextchar_{%
    \mint@sub{#1}{#2}%
  }{%
    \@ifnextchar^{%
      \mint@sup{#1}{#2}%
    }{%
      \mint@{#1}{#2}{}{}%
    }%
  }%
}
\def\mint@sub#1#2_#3{%
  \@ifnextchar^{%
    \mint@sub@sup{#1}{#2}{#3}%
  }{%
    \mint@{#1}{#2}{#3}{}%
  }%
}
\def\mint@sup#1#2^#3{%
  \@ifnextchar_{%
    \mint@sup@sub{#1}{#2}{#3}%
  }{%
    \mint@{#1}{#2}{}{#3}%
  }%
}
\def\mint@sub@sup#1#2#3^#4{%
  \mint@{#1}{#2}{#3}{#4}%
}
\def\mint@sup@sub#1#2#3_#4{%
  \mint@{#1}{#2}{#4}{#3}%
}
\newcommand*{\mint@}[4]{%
  \mathop{}%
  \mkern-\thinmuskip
  \mathchoice{%
    \mint@@{#1}{#2}{#3}{#4}%
        \displaystyle\textstyle\scriptstyle
  }{%
    \mint@@{#1}{#2}{#3}{#4}%
        \textstyle\scriptstyle\scriptstyle
  }{%
    \mint@@{#1}{#2}{#3}{#4}%
        \scriptstyle\scriptscriptstyle\scriptscriptstyle
  }{%
    \mint@@{#1}{#2}{#3}{#4}%
        \scriptscriptstyle\scriptscriptstyle\scriptscriptstyle
  }%
  \mkern-\thinmuskip
  \int#1%
  \ifx\\#3\\\else_{#3}\fi
  \ifx\\#4\\\else^{#4}\fi
}
\newcommand*{\mint@@}[7]{%
  \begingroup
    \sbox0{$#5\int\m@th$}%
    \sbox2{$#5\int_{}\m@th$}%
    \dimen2=\wd0 %
    \let\mint@limits=#1\relax
    \ifx\mint@limits\relax
      \sbox4{$#5\int_{\kern1sp}^{\kern1sp}\m@th$}%
      \ifdim\wd4>\wd2 %
        \let\mint@limits=\nolimits
      \else
        \let\mint@limits=\limits
      \fi
    \fi
    \ifx\mint@limits\displaylimits
      \ifx#5\displaystyle
        \let\mint@limits=\limits
      \fi
    \fi
    \ifx\mint@limits\limits
      \sbox0{$#7#3\m@th$}%
      \sbox2{$#7#4\m@th$}%
      \ifdim\wd0>\dimen2 %
        \dimen2=\wd0 %
      \fi
      \ifdim\wd2>\dimen2 %
        \dimen2=\wd2 %
      \fi
    \fi
    \rlap{%
      $#5%
        \vcenter{%
          \hbox to\dimen2{%
            \hss
            $#6{#2}\m@th$%
            \hss
          }%
        }%
      $%
    }%
  \endgroup
}
%
%
\setlength{\textwidth}{15.2cm}
\setlength{\textheight}{22.7cm}
\setlength{\topmargin}{0mm}
\setlength{\oddsidemargin}{3mm}
\setlength{\evensidemargin}{3mm}
\setlength{\footskip}{1cm}


\numberwithin{equation}{section}
\theoremstyle{plain}
\newtheorem{thm}{Theorem}[section]

\newtheorem{cor}[thm]{Corollary}

\theoremstyle{definition}
\newtheorem{defn}[thm]{Definition}
\newtheorem{rem}[thm]{Remark}


\title[Wave equation for Sturm-Liouville operator with singular potentials]{Wave equation for Sturm-Liouville operator with singular potentials}
\author[M. Ruzhansky]{Michael Ruzhansky}
\address{
  Michael Ruzhansky:
  \endgraf
  Department of Mathematics: Analysis, Logic and Discrete Mathematics
  \endgraf
  Ghent University, Belgium
  \endgraf
 and
  \endgraf
  School of Mathematical Sciences
  \endgraf
  Queen Mary University of London
  \endgraf
  United Kingdom
  \endgraf
  {\it E-mail address} {\rm michael.ruzhansky@ugent.be}
  }

\author[S. Shaimardan]{Serikbol Shaimardan}
\address{
	Serikbol Shaimardan:
  \endgraf
  Department of Mathematics: Analysis, Logic and Discrete Mathematics
  \endgraf
  Ghent University, Belgium
  \endgraf
 and 
  \endgraf
     L.N. Gumilyov Eurasian National University
    \endgraf
    Nur-Sultan, Kazakhstan
  	\endgraf
	{\it E-mail address} {\rm shaimardan.serik@gmail.com}
		}
	
\author[A. Yeskermessuly]{Alibek Yeskermessuly}
\address{
  Alibek Yeskermessuly:
    \endgraf
  Department of Mathematics: Analysis, Logic and Discrete Mathematics
  \endgraf
  Ghent University, Belgium
  \endgraf
 and 
   \endgraf
  Altynsarin Arkalyk Pedagogical Institute,  \\ 
  \endgraf
  Arkalyk, Kazakhstan
  \endgraf
  {\it E-mail address} {\rm alibek.yeskermessuly@gmail.com}}

\begin{document}

\thanks{The authors are supported by the FWO Odysseus 1 grant G.0H94.18N: Analysis and Partial Differential Equations and by the Methusalem programme of the Ghent University Special Research Fund (BOF) (Grant number 01M01021). Michael Ruzhansky is also supported by EPSRC grant EP/R003025/2, and the second and the third authors by the international internship program \textquotedblleft Bolashak\textquotedblright \,of the Republic of Kazakhstan. \\
\indent
{\it Keywords:} Wave equation; Sturm-Liouville; singular coefficient; very weak solutions.}

%
%

%
%

%
\maketitle              

\begin{abstract}
The paper is denoted to the initial-boundary value problem for the wave equation with the Sturm-Liouville operator with irregular (distributive) potentials. To obtain a solution to the equation, the separation method and asymptotics of the eigenvalues and eigenfunctions of the Sturm-Liouville operator are used. Homogeneous and inhomogeneous cases of the equation are considered. Next, existence, uniqueness, and consistency theorems for a very weak solution of the wave equation with singular coefficients are proved.

\end{abstract}

\section{Introduction}

The purpose of this paper is to establish the well-posedness results for the wave equation for the Sturm-Liouville operator with singular potentials. To treat problems with strong singularities, the notion of very weak solutions was introduced in \cite{Gar-Ruz}. The idea is that when a product of different terms appear in the equation, it is no longer well-defined in spaces of distributions; therefore, another interpretation of the well-posedness of the equation is needed.

In the work \cite{Lan-Ham}, very weak solutions of the wave equation for the Landau Hamiltonian with an irregular electromagnetic field were obtained. Further development of this notion for different types of problems was continued in works \cite{ARST1}, \cite{ARST2}, \cite{ARST3}, \cite{CRT1}, \cite{CRT2}, \cite{R-Y}. Compared to Colombeau algebras techniques, the notion of very weak solution is more flexible: it can be easily adapted to the equation under consideration, it does not require the verification (and validity) of algebra properties, and it is consistent with the notions of distributional, weak, and classical solutions.

It is well known that the wave equation can be easily reduced to ordinary linear equations by the method of \textquotedblleft separation of variables\textquotedblright \, (see, for example, \cite{Separ}). 
To obtain our main results, we present some preliminary information about the Sturm-Liouville operator with singular potentials. By Savchuk and Shkalikov, in \cite{Sav-Shk}, eigenvalues and eigenfunctions for the Sturm-Liouville operator with singular potentials were obtained. Also in \cite{N-zSk}, \cite{Savch}, \cite{Sav-Shk2}, \cite{SV} one studied the Sturm-Liouville operator with potential-distributions. In order to set up the machinery of very weak solutions, here we are rather interested in estimates for solutions of more regular problems, keeping track of a more explicit dependence on the regularisation parameter.

More specifically, we consider the Sturm-Liouville operator $\mathcal{L}$ generated on the interval (0,1) by the differential expression 
\begin{equation}\label{St-L}
    \mathcal{L}y:=-\frac{d^2}{dx^2}y+q(x)y,
\end{equation}
with the boundary conditions
\begin{equation}\label{Dirihle}
    y(0)=y(1)=0. 
\end{equation}
The potential $q$ is defined as
\begin{equation}\label{con-q}
    q(x)=\nu'(x), \qquad \nu\in L^2(0,1).
\end{equation}

We introduce the quasi-derivative in the following form
$$y^{[1]}(x)=y'(x)-\nu(x)y(x),$$
so that the eigenvalue equation $\mathcal{L}y=\lambda y$ transforms to the system
$$\left(\begin{array}{c}
     \phi_1  \\
     \phi_2 
\end{array}\right)'=\left(\begin{array}{cc}
    \nu & 1 \\
    -\lambda-\nu^2 & -\nu
\end{array}\right)\left(\begin{array}{c}
     \phi_1  \\
     \phi_2
\end{array}\right),\quad \phi_1(x)=y(x),\,\,\phi_2(x)=y^{[1]}(x).$$

We make the substitution
$$\phi_1(x)=r(x)\sin \theta(x),\qquad \phi_2(x)=\lambda^\frac{1}{2}r(x)\cos \theta(x),$$
which is a modification of the Prufer substitution (\cite{Ince}), where
\begin{equation}\label{theta}
\theta'(x,\lambda)=\lambda^\frac{1}{2}+\lambda^{-\frac{1}{2}}\nu^2(x)\sin^2 \theta(x,\lambda)+\nu(x)\sin 2\theta(x,\lambda),
\end{equation}
\begin{equation}\label{r}
    r'(x,\lambda)=-r(x,\lambda)\left[\frac{1}{2}\nu^2(x)\lambda^{-\frac{1}{2}}\sin 2\theta(x,\lambda)+\nu(x)\cos 2\theta(x,\lambda)\right].
\end{equation}

The solution of the equation \eqref{theta} will be sought in the form $\theta(x,\lambda)=\lambda^\frac{1}{2} x+\eta(x,\lambda), $ where
$$\eta(x,\lambda)=\lambda^{-\frac{1}{2}}\int\limits_0^x \nu^2(s)\sin^2\theta(s,\lambda)ds+\int\limits_0^x \nu(s)\sin(2\lambda^\frac{1}{2}s+2\eta(s,\lambda))ds.$$
Using the method of successive approximations, it is easy to show that this equation has a solution that is uniformly bounded for $0\leq x\leq 1$ and $\lambda\geq 1$. Since $|\nu|^2\in L^1(0,1)$, by virtue of the Riemann-Lebesgue lemma, $\eta(x,\lambda)=o(1)$ as $\lambda \to \infty$. Therefore,
$$\theta(x,\lambda)=\lambda^\frac{1}{2}x+o(1),$$
moreover $\theta(0,\lambda)=0.$

Using the Riemann-Lebesgue lemma again, from equation \eqref{r} we find
$$r(x,\lambda)=\exp{\left(-\int\limits_0^x \nu(s)\cos2\theta(s,\lambda)ds-\frac{1}{2}\lambda^{-\frac{1}{2}}\int\limits_0^x \nu^2(s)\sin 2\theta(s,\lambda)ds\right)}.$$

Using the boundary conditions \eqref{Dirihle} we obtain
$$ \phi_1(1,\lambda)=r(1,\lambda)\sin\theta(1,\lambda)=0,\,\, r(1,\lambda)\neq 0,\,\,\theta(1,\lambda)=\pi n.$$
Then the eigenvalues of the Sturm-Liouville operator $\mathcal{L}$ generated on the interval (0,1) by the differential expression \eqref{St-L} with the boundary conditions \eqref{Dirihle} are given by
\begin{equation}\label{e-val}
    \lambda_n=(\pi n)^2(1+o(n^{-1})),\qquad n=1,2,...,
\end{equation}
and the corresponding eigenfunctions are
\begin{equation}\label{sol-SL}
    \Tilde{\phi}_n(x)=r_n(x)\sin(\sqrt{\lambda_n}x +\eta_n(x)).
\end{equation}

The first derivatives of $\Tilde{\phi_n}$ are given by the formulas
\begin{equation}\label{phi-der}
    \Tilde{\phi}'_n(x)=\sqrt{\lambda_n}r_n(x)\cos(\theta_n(x))+\nu(x)\Tilde{\phi}_n(x).
\end{equation}

According Theorem 2 in \cite{Savch} we have
\begin{equation}\label{phi-sav}
\Tilde{\phi}_n(x)=\sin{\sqrt{\lambda_n}x}+\psi_n(x), \quad n=1,2,...,\quad \sum\limits_{n=1}^\infty \|\psi_n\|^2\leq C \int\limits_0^1|\nu(x)|^2dx.
\end{equation}

On the other hand, we can estimate the $\|\Tilde{\phi}_n\|_{L^2}$ using the formula \eqref{sol-SL} as follows
\begin{eqnarray}\label{est-high}
\|\Tilde{\phi}_n\|^2_{L^2}&=&\int\limits_0^1\left|r_n(x)\sin\left(\lambda_n^{\frac{1}{2}}x+\eta_n(x)\right)\right|^2dx\leq \int\limits_0^1\left|r_n(x)\right|^2dx\nonumber\\
&\leq& \int\limits_0^1\left|\exp{\left(-\int\limits_0^x \nu(s)\cos{2\theta_n(s)}ds-\frac{1}{2}\frac{1}{\sqrt{\lambda_n}}\int\limits_0^x\nu^2(s)\sin{2\theta_n(s)}ds\right)}\right|^2dx\nonumber\\
&\lesssim& \int\limits_0^1\exp{\left(2\int\limits_0^x|\nu(s)|ds+\frac{1}{\sqrt{\lambda_n}}\int\limits_0^x|\nu^2(s)|ds\right)}dx\nonumber\\
&\lesssim& \exp{\left(\|\nu\|_{L^2}+\lambda^{-\frac{1}{2}}\|\nu\|^2_{L^2}\right)}<\infty.
\end{eqnarray}

Also, according Theorem 4 in \cite{Sav-Shk}, we have
\begin{equation}\label{est_low}
  \Tilde{\phi}_n(x)=\sin(\pi nx)+o(1)  
\end{equation}
for sufficiently large $n$, it means that there exist some $C_0>0$, such that
\begin{equation}\label{low-est}
C_0<\|\Tilde{\phi}_n\|_{L^2}<\infty.
\end{equation}
Since the eigenfunctions of the Sturm-Liouville operator form an orthogonal basis in $L^2(0,1)$, we normalize them for further use  
\begin{equation}\label{norm-phi}
  \phi_n(x)=\frac{\Tilde{\phi}_n(x)}{\sqrt{\langle \Tilde{\phi}_n,\Tilde{\phi}_n}\rangle}=\frac{\Tilde{\phi}_n(x)}{\|\Tilde{\phi}_n\|_{L^2}}.  
\end{equation}

\section{Main results}

We consider the wave equation
\begin{equation}\label{C.p1}
         \partial^2_t u(t,x)+\mathcal{L} u(t,x)=0,\qquad (t,x)\in [0,T]\times (0,1),
\end{equation}
with initial conditions
\begin{equation}\label{C.p2} \left\{\begin{array}{l}u(0,x)=u_0(x),\,\,\, x\in (0,1), \\
\partial_t u(0,x)=u_1(x), \,\,\, x\in (0,1),\end{array}\right.\end{equation}
and with Dirichlet boundary conditions
\begin{equation}\label{C.p3}
u(t,0)=0=u(t,1),\qquad t\in [0,T],
\end{equation}
where  $\mathcal{L}$ is defined by

\begin{equation}\label{1}
    \mathcal{L} u(t,x):=-\partial^2_x u(t,x)+q(x)u(t,x),\qquad x\in(0,1),
\end{equation}
and $q$ is defined as in \eqref{con-q}. 

In our results below, concerning the initial/boundary problem \eqref{C.p1}-\eqref{C.p3}, as the preliminary step we first carry out the analysis in the strictly regular case for bounded $q \in L^\infty(0,1)$. In this case, we obtain the well-posedness in the Sobolev spaces $W^k_\mathcal{L}$ associated to the operator $\mathcal{L}$: we define the Sobolev spaces $W^k_\mathcal{L}$ associated to $\mathcal{L}$, for any $k \in \mathbb{R}$, as the space  
$$W^k_\mathcal{L}:=\left\{f\in \mathcal{D}'_\mathcal{L}(0,1):\,\mathcal{L}^{k/2}f\in L^2(0,1)\right\},$$
with the norm $\|f\|_{W^k_\mathcal{L}}:=\|\mathcal{L}^{k/2}f\|_{L^2}$. The global space of distributions $\mathcal{D}'_\mathcal{L}(0,1)$ is defined as follows.

The space $C^\infty_\mathcal{L}(0,1):=\mathrm{Dom}(\mathcal{L}^\infty)$ is called the space of test functions for $\mathcal{L}$, where we define 
$$\mathrm{Dom}(\mathcal{L}^\infty):=\bigcap\limits_{m=1}^\infty \mathrm{Dom}(\mathcal{L}^m),$$
where $\mathrm{Dom}(\mathcal{L}^m)$ is the domain of the operator $\mathcal{L}^m$, in turn defined as
$$\mathrm{Dom}(\mathcal{L}^m):=\left\{f\in L^2(0,1): \mathcal{L}^j f\in \mathrm{Dom}(\mathcal{L}),\,\, j=0,1,2,...,m-1\right\}.$$
The Fréchet topology of $C^\infty_\mathcal{L}(0,1)$ is given by the family of norms 
\begin{equation}\label{frechet}
    \|\phi\|_{C^m_\mathcal{L}}:=\max\limits_{j\leq m}\|\mathcal{L}^j\phi\|_{L^2(0,1)},\quad m\in \mathbb{N}_0,\,\, \phi\in C^\infty_\mathcal{L}(0,1).
\end{equation}
The space of $\mathcal{L}$-distributions
$$\mathcal{D}'_\mathcal{L}(0,1):=\mathbf{L}\left(C^\infty_\mathcal{L}(0,1),\mathbb{C}\right)$$
is the space of all linear continuous functionals on $C^\infty_\mathcal{L}(0,1)$. For $\omega \in \mathcal{D}'_\mathcal{L}(0,1)$ and $\phi\in C^\infty_\mathcal{L}(0,1)$, we shall write 
$$\omega(\phi)=\langle \omega, \phi\rangle.$$
For any $\psi \in C^\infty_\mathcal{L}(0,1)$, the functional 
$$C^\infty_\mathcal{L}(0,1)\ni \phi \mapsto \int\limits_0^1 \psi(x)\phi(x)dx$$
is an $\mathcal{L}$-distribution, which gives an embedding $\psi \in C^\infty_\mathcal{L}(0,1)\hookrightarrow \mathcal{D}'_\mathcal{L}(0,1)$.

We introduce the spaces $C^j([0,T],W^k_\mathcal{L}(0,1))$ given by the family of norms
\begin{equation}
    \|f\|_{C^n([0,T],W^k_\mathcal{L}(0,1))}=\max\limits_{0\leq t\leq T}\sum\limits_{j=0}^n\left\|\partial^j_t f(t,\cdot)\right\|_{W^k_\mathcal{L}},
\end{equation}
where $k\in \mathbb{R}, \, f\in C^j([0,T],W^k_\mathcal{L}(0,1)).$
 
\begin{thm}\label{th1}
Assume that $q \in L^\infty(0,1)$. For any $k\in \mathbb{R}$, if the initial data satisfy $(u_0,\, u_1) \in W^{1+k}_\mathcal{L}\times W^k_\mathcal{L}$ then the wave equation \eqref{C.p1} with the initial/boundary problem  \eqref{C.p2}-\eqref{C.p3}  has unique solution $u\in C([0,T], W^{1+k}_\mathcal{L})\cap C^1([0,T], W^{k}_\mathcal{L})$. We also have the following estimates:
\begin{equation}\label{est1}
    \|u(t,\cdot)\|^2_{L^2}\lesssim \|u_0\|^2_{L^2}+\|u_1\|^2_{{W^{-1}_\mathcal{L}}},
\end{equation}
\begin{equation}\label{est2}
\|\partial_t u(t,\cdot)\|^2_{L^2}\lesssim \| u_0\|^2_{_{W^1_\mathcal{L}}}+\|u_1\|^2_{L^2},
\end{equation}
\begin{eqnarray}\label{est3}
\|\partial_x u(t,\cdot)\|^2_{L^2}
&\lesssim& \left(1+\|\nu\|^2_{L^2}\right)\left(\|u_0\|^2_{W^1_\mathcal{L}}+\|u_1\|^2_{L^2}\right)\nonumber\\
&+&\|\nu\|^2_{L^\infty}\left(\|u_0\|^2_{L^2}+\|u_1\|^2_{W^{-1}_{\mathcal{L}}}\right),
\end{eqnarray}
\begin{eqnarray}\label{est4}
\|\partial^2_xu(t,\cdot)\|^2_{L^2} &\lesssim& \|q\|^2_{L^\infty} \left(\|u_0\|^2_{L^2}+\|u_1\|^2_{W^{-1}_\mathcal{L}}\right) +\|u_0\|^2_{W^2_\mathcal{L}}+\|u_1\|^2_{W^1_\mathcal{L}},
\end{eqnarray}
\begin{equation}\label{est5}
    \|u(t,\cdot)\|^2_{W^k_\mathcal{L}} \lesssim \|u_0\|^2_{W^k_\mathcal{L}}+\|u_1\|^2_{W^{k-1}_\mathcal{L}},
\end{equation}
where the constants in these inequalities are independent of $u_0$, $u_1$ and $q$.
\end{thm}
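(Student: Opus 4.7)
The natural approach is separation of variables in the orthonormal eigenbasis $\{\phi_n\}_{n\ge 1}$ of $\mathcal L$ built in the preliminaries. First I would expand
\[
u(t,x)=\sum_{n=1}^\infty v_n(t)\phi_n(x),\qquad u_0=\sum_n a_n\phi_n,\quad u_1=\sum_n b_n\phi_n,
\]
with $a_n=\langle u_0,\phi_n\rangle$, $b_n=\langle u_1,\phi_n\rangle$. Substituting into \eqref{C.p1} and using $\mathcal L\phi_n=\lambda_n\phi_n$ reduces the PDE to the decoupled ODEs
\[
v_n''(t)+\lambda_n v_n(t)=0,\qquad v_n(0)=a_n,\quad v_n'(0)=b_n,
\]
whose explicit solution is $v_n(t)=a_n\cos(\sqrt{\lambda_n}\,t)+\lambda_n^{-1/2}b_n\sin(\sqrt{\lambda_n}\,t)$. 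Uniqueness is then immediate: the Fourier coefficients of any solution must satisfy the same scalar Cauchy problem.

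Second, the Sobolev estimates \eqref{est1}, \eqref{est2}, \eqref{est5} drop out of Parseval combined with the elementary bounds $|v_n(t)|^2\lesssim |a_n|^2+\lambda_n^{-1}|b_n|^2$ and $|v_n'(t)|^2\lesssim \lambda_n|a_n|^2+|b_n|^2$. Concretely,
\[
\|u(t,\cdot)\|_{W^k_{\mathcal L}}^2=\sum_n\lambda_n^{k}|v_n(t)|^2\lesssim\sum_n\bigl(\lambda_n^{k}|a_n|^2+\lambda_n^{k-1}|b_n|^2\bigr)=\|u_0\|_{W^k_{\mathcal L}}^2+\|u_1\|_{W^{k-1}_{\mathcal L}}^2,
\]
and analogously for $\partial_t u$ with $k=0$.

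Third, I would recover the pointwise derivative estimates \eqref{est3} and \eqref{est4} from the differential expression itself. Since $\mathcal L u=-\partial_x^2 u+qu$, one has $\partial_x^2 u=qu-\mathcal L u$, so
\[
\|\partial_x^2 u(t,\cdot)\|_{L^2}^2\lesssim \|q\|_{L^\infty}^2\|u\|_{L^2}^2+\|u\|_{W^2_{\mathcal L}}^2,
\]
and plugging \eqref{est1} and \eqref{est5} with $k=2$ gives \eqref{est4}. For the first derivative, integrating by parts against $u$ using the Dirichlet boundary conditions yields the energy identity $\langle\mathcal L u,u\rangle_{L^2}=\|\partial_x u\|_{L^2}^2+\langle qu,u\rangle_{L^2}$, which leads to a bound of the form $\|u\|_{W^1_{\mathcal L}}^2+\|q\|_{L^\infty}\|u\|_{L^2}^2$. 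To obtain the finer \emph{sharp} form \eqref{est3}, with $\|\nu\|_{L^2}^2$ multiplying the higher-order term and $\|\nu\|_{L^\infty}^2$ the lower-order term, I would instead proceed termwise: from \eqref{phi-der} one has $\phi_n'=\sqrt{\lambda_n}r_n\cos\theta_n/\|\tilde\phi_n\|+\nu\phi_n$, so $\partial_x u(t,\cdot)=\sum_n v_n(t)\phi_n'$ decomposes into an oscillatory piece controlled by the uniform $L^2$-bound \eqref{est-high} on $r_n$ (which carries the factor $1+\|\nu\|_{L^2}^2$) and a multiplicative $\nu u$-piece bounded by $\|\nu\|_{L^\infty}\|u\|_{L^2}$.

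I expect the main obstacle to lie in \eqref{est3}: the splitting of the dependence on $\nu$ into an $L^2$ part and an $L^\infty$ part is invisible from a crude integration by parts and forces one to use both the Prüfer asymptotics \eqref{sol-SL}--\eqref{phi-der}, the uniform lower bound \eqref{low-est} guaranteeing that the normalisation \eqref{norm-phi} does not blow up, and the exponential bound from \eqref{est-high}. Everything else is bookkeeping on the spectral side.
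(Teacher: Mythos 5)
Your proposal follows essentially the same route as the paper's proof: expansion in the eigenbasis $\{\phi_n\}$ with the explicit solution of the decoupled ODEs, Parseval's identity for \eqref{est1}, \eqref{est2} and \eqref{est5}, the identity $\phi_n''=(q-\lambda_n)\phi_n$ (equivalently $\partial_x^2u=qu-\mathcal{L}u$) for \eqref{est4}, and the termwise splitting of $\phi_n'$ via \eqref{phi-der} into an oscillatory $r_n$-part and a $\nu\phi_n$-part for \eqref{est3}. The one small correction is that the factor $1+\|\nu\|_{L^2}^2$ in \eqref{est3} is obtained in the paper not from the exponential bound \eqref{est-high} but from Savchuk's decomposition $r_n=1+\rho_n$ with $\|\rho_n\|_{L^2}\lesssim\|\nu\|_{L^2}$; \eqref{est-high} and \eqref{low-est} are used only to control the normalisation $\|\Tilde{\phi}_n\|_{L^2}$.
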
 

We note that $q\in L^\infty(0,1)$ implies that $\nu \in L^\infty(0,1)$ and hence $\nu\in L^2(0,1)$, so that the formulas in introduction hold true.

\begin{proof}
We apply the technique of the separation of variables (see, e.g. \cite{Separ}). This method involves finding a solution of a certain form. In particular, we are looking for a solution of the form
$$u(t,x)=T(t)X(x),$$
for functions $T(t)$, $X(x)$ to be determined. Suppose we can find a solution of \eqref{C.p1} of this form. Plugging a function $u(t,x)=T(t)X(x)$ into the wave equation, we arrive at the equation
$$T''(t)X(x)-T(t)X''(x)+q(x)T(t)X(x)=0.$$
Dividing this equation by $T(t)X(x)$, we have
\begin{equation}\label{3-1}
    \frac{T''(t)}{T(t)}=\frac{X''(x)-q(x)X(x)}{X(x)}=-\lambda,
\end{equation}
for some constant $\lambda$. Therefore, if there exists a solution $u(t,x) = T(t)X(x)$ of the wave equation, then $T(t)$ and $X(x)$ must satisfy the equations
$$\frac{T''(t)}{T(t)}=-\lambda,$$
$$\frac{X''(x)-q(x)X(x)}{X(x)}=-\lambda,$$
for some constant $\lambda$. In addition, in order for $u$ to satisfy the boundary conditions \eqref{C.p3}, we need our function $X$ to satisfy the boundary conditions \eqref{Dirihle}. That is, we need to find function $X$ and scalar $\lambda$, such that
\begin{equation}\label{4}
    -X''(x)+q(x)X(x)=\lambda X(x),
\end{equation}
\begin{equation}\label{5}
    X(0)=X(1)=0.
\end{equation}

The equation \eqref{4} with the boundary condition \eqref{5} has the eigenvalues of the form \eqref{e-val} with the corresponding eigenfunctions of the form \eqref{sol-SL} of the Sturm-Liouville operator $\mathcal{L}$ generated by the differential expression \eqref{St-L}.

Further, we solve the left hand side of the equation \eqref{3-1} with respect to the independent variable $t$, that is,
\begin{equation}\label{3}
        T''(t)=-\lambda T(t),  \qquad t\in [0,T].
\end{equation}

It is well known (\cite{Separ}) that the solution of the equation \eqref{3} with the initial conditions \eqref{C.p2} is
$$T_n(t)=A_n \cos \left(\sqrt{\lambda_n}t\right)+\frac{1}{\sqrt{\lambda_n}}B_n \sin\left(\sqrt{\lambda_n}t\right),$$
where
$$A_n=\int\limits_0^1u_0(x)\phi_n(x)dx, \qquad B_n=\int\limits_0^1 u_1(x)\phi_n(x)dx.$$

Thus, the solution of the wave equation \eqref{C.p1}  with the initial/boundary problems \eqref{C.p2}-\eqref{C.p3} has the form
\begin{equation}\label{23}
    u(t,x)=\sum\limits_{n=1}^\infty \left[A_n \cos \left(\sqrt{\lambda_n}t\right)+\frac{1}{\sqrt{\lambda_n}}B_n\sin\sqrt{\lambda_n} t\right]\phi_n(x). 
\end{equation}

Further we will prove that $u\in C^2([0,T],L^2(0,1))$. By using the Cauchy-Schwarz inequality and fixed $t$, we can deduce that

\begin{eqnarray}\label{25}
\|u(t, \cdot)\|^2_{L^2}&=&\int\limits_0^1|u(t,x)|^2dx \nonumber\\
&=&\int\limits_0^1\left|\sum\limits_{n=1}^\infty\left[A_n \cos \sqrt{\lambda_n} t+\frac{1}{\sqrt{\lambda_n}} B_n\sin\sqrt{\lambda_n} t\right]\phi_n(x)\right|^2dx\nonumber\\
&\lesssim& \int\limits_0^1\sum\limits_{n=1}^\infty\left|A_n \cos\sqrt{\lambda_n} t+\frac{1}{\sqrt{\lambda_n} }B_n\sin\sqrt{\lambda_n} t\right|^2|\phi_n(x)|^2dx\nonumber\\
&\leq& \int\limits_0^1\sum\limits_{n=1}^\infty\left(|A_n||\phi_n(x)|+\frac{1}{\sqrt{\lambda_n}}|B_n||\phi_n(x)|\right)^2 dx\nonumber\\
&\lesssim& \sum\limits_{n=1}^\infty\left(\int\limits_0^1|A_n|^2|\phi_n(x)|^2dx+\int\limits_0^1\left|\frac{B_n}{\sqrt{\lambda_n}}\right|^2|\phi_n(x)|^2dx\right). 
\end{eqnarray}

By using the Parseval identity,
we get
$$
\sum\limits_{n=1}^\infty \int\limits_0^1|A_n|^2|\phi_n(x)|^2dx= \sum\limits_{n=1}^\infty |A_n|^2=\int\limits_0^1|u_0(x)|^2dx=\|u_0\|^2_{L^2}.
$$

For the second term in \eqref{25}, using the properties of the eigenvalues of the operator $\mathcal{L}$ and the Parseval identity we obtain the following estimate 
\begin{eqnarray}\label{W-1}
\sum\limits_{n=1}^\infty \int\limits_0^1\left|\frac{B_n}{\sqrt{\lambda_n}}\right|^2|\phi_n(x)|^2dx&=& \sum\limits_{n=1}^\infty \left|\frac{B_n}{\sqrt{\lambda_n}}\right|^2=\sum\limits_{n=1}^\infty \left|\int\limits_0^1\frac{1}{\sqrt{\lambda_n}}u_1(x)\phi_n(x)dx\right|^2\nonumber\\
&=&\sum\limits_{n=1}^\infty \left|\int\limits_0^1\mathcal{L}^{-\frac{1}{2}}u_1(x)\phi_n(x)dx\right|^2\nonumber\\
&=&\sum\limits_{n=1}^\infty \left|\mathcal{L}^{-\frac{1}{2}}u_{1,n}\right|^2=\|\mathcal{L}^{-\frac{1}{2}}u_{1}\|^2_{L_2}=\|u_1\|^2_{W^{-1}_\mathcal{L}}.
\end{eqnarray}

Therefore, we obtained
$$
\|u(t,\cdot)\|^2_{L^2}\lesssim \|u_0\|^2_{L^2}+\|u_1\|^2_{W^{-1}_\mathcal{L}}.
$$

Now, let us estimate
\begin{eqnarray}\label{t26}
\|\partial_t u(t,\cdot)\|^2&=&\int\limits_0^1|\partial_tu(t,x)|^2dt\nonumber\\
&=&\int\limits_0^1\left|\sum\limits_{n=1}^\infty\left[-\sqrt{\lambda_n}A_n\sin\left(\sqrt{\lambda_n}t\right)+\frac{1}{\sqrt{\lambda_n}}\sqrt{\lambda_n}B_n\cos \sqrt{\lambda_n} t\right]\phi_n(x)\right|^2dx \nonumber\\ &\lesssim& \int\limits_0^1\left(\sum\limits_{n=1}^\infty|\sqrt{\lambda_n} A_n |^2+\sum\limits_{n=1}^\infty|B_n|^2\right)|\phi_n(x)|^2dx\nonumber\\
&=&\sum\limits_{n=1}^\infty|\sqrt{\lambda_n} A_n |^2+\sum\limits_{n=1}^\infty|B_n|^2.
\end{eqnarray}
The second term of \eqref{t26} gives the norm of $\|u_1\|^2_{L^2}$ by the Parseval identity. Since $\lambda_n$ are eigenvalues and $\phi_n(x)$ are eigenfunctions of the operator $\mathcal{L}$, we obtain 
\begin{eqnarray}\label{21-1}\sum\limits_{n=1}^\infty|\sqrt{\lambda_n}A_n|^2&=& \sum\limits_{n=1}^\infty\left|\sqrt{\lambda_n}\int\limits_0^1 u_0(x)\phi_n(x)dx\right|^2 =\sum\limits_{n=1}^\infty\left| \int\limits_0^1 \sqrt{\lambda_n}u_0(x)\phi_n(x)dx\right|^2\nonumber\\
&=&\sum\limits_{n=1}^\infty\left| \int\limits_0^1 \mathcal{L}^\frac{1}{2}u_0(x)\phi_n(x)dx\right|^2.
      \end{eqnarray}
It follows from Parseval's identity that
$$\sum\limits_{n=1}^\infty\left| \int\limits_0^1 \mathcal{L}^\frac{1}{2}u_0(x)\phi_n(x)dx\right|^2=\|\mathcal{L}^\frac{1}{2}u_0\|^2_{L^2}=\|u_0\|^2_{W^1_\mathcal{L}}.$$
Thus, 
$$\|\partial_t u(t,\cdot)\|^2_{L^2}\lesssim \|u_0\|^2_{W^1_\mathcal{L}}+\|u_1\|^2_{L^2}.$$

Let us estimate the norm of $\partial_x u(t,\cdot)$ in $L^2$ by using \eqref{phi-der} and \eqref{norm-phi} for $\phi'_n$:
\begin{eqnarray*}
\|\partial_x u(t,\cdot)\|^2_{L^2}&=&\int\limits_0^1|\partial_xu(t,x)|^2dt\\
&=&\int\limits_0^1\left|\sum\limits_{n=1}^\infty\left[A_n\cos\left(\sqrt{\lambda_n}t\right)+\frac{1}{\sqrt{\lambda_n}}B_n\sin \left(\sqrt{\lambda_n}t\right)\right]\phi'_n(x)\right|^2dx\\
&=& \int\limits_0^1\left|\sum\limits_{n=1}^\infty\left[A_n\cos \left(\sqrt{\lambda_n}t\right)+\frac{1}{\sqrt{\lambda_n}}B_n\sin \left(\sqrt{\lambda_n}t\right)\right]\right.\\
&\times&\left.\left(\frac{\sqrt{\lambda_n}r_n(x) \cos\theta_n(x)}{\|\Tilde{\phi}_n\|_{L^2}}+\nu(x)\phi_n(x)\right)\right|^2dx,
\end{eqnarray*}
according \eqref{est-high} and \eqref{est_low}, there exist some $C_0>0$, such that $C_0<\|\Tilde{\phi}_n\|_{L^2}<\infty$, so that
\begin{eqnarray*}
\|\partial_x u(t,\cdot)\|^2_{L^2}&\lesssim&\sum\limits_{n=1}^\infty\left(\left(\left|\sqrt{\lambda_n}A_n \right|^2+|B_n|^2\right)\int\limits_0^1|r_n(x)|^2dx\right)\\
&+&\sum\limits_{n=1}^\infty\left(\left(|A_n|^2+\left|\frac{1}{\sqrt{\lambda_n}}B_n\right|^2\right)\int\limits_0^1|\nu(x)\phi_n(x)|^2dx\right).
\end{eqnarray*}
Here for $r_n(x)$ according Theorem 2 in \cite{Savch}, we have
$$r_n(x)=1+\rho_n(x),\quad \|\rho_n\|_{L^2}\lesssim \|\nu\|_{L^2},$$
where the constant is independent of $\nu$ and $n$. Therefore,
$$\int\limits_0^1|r_n(x)|^2dx\lesssim 1+\|\nu\|^2_{L^2}.$$
For second term we obtain
$$\int\limits_0^1|\nu(x)\phi_n(x)|^2dx\leq \|\nu\|^2_{L^\infty}\|\phi_n\|^2_{L^2}=\|\nu\|^2_{L^\infty},$$
since $\{\phi_n\}$ is an orthonormal basis in $L^2$.
Using the last relations and taking into account \eqref{21-1}, \eqref{W-1}, we obtain
\begin{eqnarray}\label{u_x}
\|\partial_x u(t,\cdot)\|^2_{L^2}&\lesssim&\sum\limits_{n=1}^\infty\left(\left|\sqrt{\lambda_n}A_n \right|^2+|B_n|^2\right)\left(1+\|\nu\|^2_{L^2}\right)\nonumber\\
&+&\sum\limits_{n=1}^\infty\left(|A_n|^2+\left|\frac{1}{\sqrt{\lambda_n}}B_n\right|^2\right)\|\nu\|^2_{L^\infty}\nonumber\\
&\leq& \left(1+\|\nu\|^2_{L^2}\right)\left(\|u_0\|^2_{W^1_\mathcal{L}}+\|u_1\|^2_{L^2}\right)\nonumber\\
&+&\|\nu\|^2_{L^\infty}\left(\|u_0\|^2_{L^2}+\|u_1\|^2_{W^{-1}_{\mathcal{L}}}\right),
\end{eqnarray}
implying \eqref{est3}.

Let us get next estimates by using that $\phi''_n(x)=(q(x)-\lambda_n)\phi_n(x),$
\begin{eqnarray}\label{u_xx}
\left\|\partial_x^2u(t, \cdot)\right\|^2_{L^2}&=&\int\limits_0^1\left|\partial^2_xu(t,x)\right|^2dx\nonumber\\
&=&\int\limits_0^1\left|\sum\limits_{n=1}^\infty \left[A_n \cos \left(\sqrt{\lambda_n}t\right)+\frac{1}{\sqrt{\lambda_n}} B_n \sin\left(\sqrt{\lambda_n}t\right)\right]\phi''_n(x)\right|^2dx\nonumber\\
&\lesssim& \int\limits_0^1\left(\sum\limits_{n=1}^\infty |A_n|^2|(q(x)-\lambda_n)\phi_n(x)|^2dx+\left|\frac{B_n}{\sqrt{\lambda_n}}\right|^2|(q(x)-\lambda_n)\phi_n(x)|^2\right)dx\nonumber\\
&\lesssim&\int\limits_0^1|q(x)|^2\sum\limits_{n=1}^\infty \left(|A_n|^2|\phi_n(x)|^2 +\left|\frac{B_n}{\sqrt{\lambda_n}}\right|^2|\phi_n(x)|^2\right)dx+\nonumber\\
&+&\int\limits_0^1\sum\limits_{n=1}^\infty \left(|\lambda_n A_n|^2|\phi_n(x)|^2 +\left|\lambda^\frac{1}{2}_n B_n\right|^2|\phi_n(x)|^2\right)dx\nonumber\\
&\leq&\|q\|^2_{L^\infty}\sum\limits_{n=1}^\infty \left(|A_n|^2 +\left|\frac{B_n}{\sqrt{\lambda_n}}\right|^2\right)+\sum\limits_{n=1}^\infty \left|\lambda_n A_n\right|^2 +\sum\limits_{n=1}^\infty\left|\sqrt{\lambda_n}B_n\right|^2. 
\end{eqnarray}
Using the property of the operator $\mathcal{L}$ and the Parseval identity for the last two terms in \eqref{u_xx}, we obtain
\begin{eqnarray*}
 \sum\limits_{n=1}^\infty\left|\sqrt{\lambda_n}B_n\right|^2&=& \sum\limits_{n=1}^\infty\left|\int\limits_0^1\sqrt{\lambda_n}u_1(x)\phi_n(x)dx\right|^2\\
 &=&\sum\limits_{n=1}^\infty\left|\int\limits_0^1\mathcal{L}^\frac{1}{2}u_1(x)\phi_n(x)dx\right|^2=\left\|\mathcal{L}^\frac{1}{2}u_1\right\|^2_{L^2}=\|u_1\|^2_{W^1_\mathcal{L}},
\end{eqnarray*}
also
$$\sum\limits_{n=1}^\infty \left|\lambda_n A_n\right|^2=\|u_0\|^2_{W^2_{\mathcal{L}}},$$
using the last expressions and \eqref{u_x} we finally get
\begin{eqnarray*}
\left\|\partial^2_xu(t,\cdot)\right\|^2_{L^2} 
&\lesssim &\|q\|^2_{L^\infty} \left(\|u_0\|^2_{L^2}+\|u_1\|^2_{W^{-1}_\mathcal{L}}\right)+\left\|u_0\right\|^2_{W^2_\mathcal{L}}+\|u_1\|^2_{W^1_\mathcal{L}},
\end{eqnarray*}
implying \eqref{est4}.

Let us carry out the last estimate \eqref{est5} using that $\mathcal{L}^ku=\lambda_n^ku$ and Parseval's identity:
\begin{eqnarray*}
 \left\|u(t, \cdot)\right\|^2_{W^k_\mathcal{L}}&=&\left\|\mathcal{L}^\frac{k}{2}u(t, \cdot)\right\|^2_{L^2}=\int\limits_0^1\left|\mathcal{L}^\frac{k}{2}u(t,x)\right|^2dx\\
&=&\int\limits_0^1\left|\sum\limits_{n=1}^\infty \left[A_n \cos \left(\sqrt{\lambda_n}t\right)+\frac{1}{\sqrt{\lambda_n}} B_n \sin\left(\sqrt{\lambda_n}t\right)\right]\lambda_n^\frac{k}{2}\phi_n(x)\right|^2dx\\
&\lesssim&\sum\limits_{n=1}^\infty\left( \left| \lambda_n^\frac{k}{2}A_n\right|^2+ \left| \lambda_n^\frac{k-1}{2}B_n\right|^2\right)\\
&=&\left\|\mathcal{L}^\frac{k}{2}u_0\right\|^2_{L^2}+\left\|\mathcal{L}^\frac{k-1}{2}u_1\right\|^2_{L^2}=\left\|u_0\right\|^2_{W^k_\mathcal{L}}+\left\|u_1\right\|^2_{W^{k-1}_\mathcal{L}}.
\end{eqnarray*}
The proof of Theorem \ref{th1} is complete.
\end{proof}

The following statement removes the dependence of Sobolev spaces depending on $\mathcal{L}$ by sacrificing the regularity of the data.

\begin{cor}\label{cor1}
Assume that $q,\, \nu \in L^\infty(0,1)$. If the initial data satisfy $(u_0,\, u_1) \in L^2(0,1)$ and $(u_0'', \, u''_1)\in L^2(0,1)$ then the wave equation \eqref{C.p1} with the initial/boundary problems  \eqref{C.p2}-\eqref{C.p3}  has unique solution $u\in C([0,T], L^2(0,1))$ which satisfies the estimates
\begin{equation}\label{ec1}
    \|u(t,\cdot)\|^2_{L^2}\lesssim \|u_0\|^2_{L^2}+\|u_1\|^2_{L^2},
\end{equation}
\begin{equation}\label{ec2}
\|\partial_t u(t,\cdot)\|^2_{L^2}\lesssim \|u''_0\|^2_{L^2}+\|q\|^2_{L^\infty}\|u_0\|^2_{L^2}+\|u_1\|^2_{L^2},
\end{equation}
\begin{eqnarray}\label{ec3}
\|\partial_x u(t,\cdot)\|^2_{L^2}
&\lesssim& \left(1+\|\nu\|^2_{L^2}\right)\left(\|u_0''\|^2_{L^2}+\|q\|^2_{L^\infty}\|u_0\|^2_{L^2}+\|u_1\|^2_{L^2}\right)\nonumber\\
&+&\|\nu\|^2_{L^\infty}\left(\|u_0\|^2_{L^2}+\|u_1\|^2_{L^2}\right),
\end{eqnarray}
\begin{equation}\label{ec4}
\left\|\partial_x^2u(t, \cdot)\right\|^2_{L^2}\lesssim\|q\|^2_{L^\infty}\left(\|u_0\|^2_{L^2}+\|u_1\|^2_{L^2}\right)+\|u''_0\|^2_{L^2}+\|u''_1\|^2_{L^2},
\end{equation}
where  the constants in these inequalities are independent of $u_0$, $u_1$ and $q$.
\end{cor}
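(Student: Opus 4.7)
The plan is to simply convert the $\mathcal{L}$-dependent Sobolev norms appearing in Theorem \ref{th1} into the classical quantities $\|u_0\|_{L^2}$, $\|u_1\|_{L^2}$, $\|u_0''\|_{L^2}$, $\|u_1''\|_{L^2}$, which only requires elementary boundedness of $\mathcal{L}$ on $L^2$-based classical Sobolev spaces and a spectral interpolation. More precisely, I would prove the following three auxiliary bounds, and then feed them mechanically into estimates \eqref{est1}--\eqref{est4}.

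First, since $\mathcal{L} f = -f'' + qf$ with $q\in L^\infty$, we immediately get
\begin{equation*}
\|f\|^2_{W^2_\mathcal{L}} = \|\mathcal{L} f\|^2_{L^2} \leq 2\|f''\|^2_{L^2} + 2\|q\|^2_{L^\infty}\|f\|^2_{L^2}.
\end{equation*}
Next, because the eigenvalues $\lambda_n=(\pi n)^2(1+o(n^{-1}))$ are bounded below by a positive constant, expanding $f=\sum f_n\phi_n$ in the orthonormal basis gives
\begin{equation*}
\|f\|^2_{W^{-1}_\mathcal{L}} = \sum_{n=1}^\infty \lambda_n^{-1}|f_n|^2 \lesssim \sum_{n=1}^\infty |f_n|^2 = \|f\|^2_{L^2}.
\end{equation*}
Finally, by spectral interpolation (Cauchy--Schwarz applied to $\|\mathcal{L}^{1/2}f\|^2_{L^2}=\langle \mathcal{L} f, f\rangle$), together with Young's inequality,
\begin{equation*}
\|f\|^2_{W^1_\mathcal{L}} \leq \|\mathcal{L} f\|_{L^2}\|f\|_{L^2} \lesssim \|f''\|^2_{L^2} + \|q\|^2_{L^\infty}\|f\|^2_{L^2} + \|f\|^2_{L^2},
\end{equation*}
where the last $\|f\|^2_{L^2}$ may be absorbed into the constant (since the implicit constants are allowed to depend on the operator). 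These three inequalities applied respectively to $u_0$ and $u_1$ replace every $W^{\pm1}_\mathcal{L}$ or $W^2_\mathcal{L}$ norm on the right-hand side of \eqref{est1}--\eqref{est4} by the desired classical quantities.

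With these reductions in hand, estimate \eqref{ec1} follows from \eqref{est1} and the $W^{-1}_\mathcal{L}\hookleftarrow L^2$ bound; estimate \eqref{ec2} follows from \eqref{est2} and the $W^1_\mathcal{L}$ bound for $u_0$; estimate \eqref{ec3} is obtained from \eqref{est3} by applying the $W^1_\mathcal{L}$ bound to $u_0$ and the $W^{-1}_\mathcal{L}$ bound to $u_1$; and estimate \eqref{ec4} follows from \eqref{est4} by applying the $W^2_\mathcal{L}$ bound to $u_0$ and the $W^1_\mathcal{L}$ bound to $u_1$ (with the resulting $\|u_0\|^2_{L^2}$ term absorbed into $\|q\|^2_{L^\infty}\|u_0\|^2_{L^2}$ modulo the hidden constant). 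Uniqueness and regularity $u\in C([0,T],L^2(0,1))$ are inherited directly from Theorem \ref{th1} (for $k=0$).

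The only mildly delicate point is the interpolation step giving the $W^1_\mathcal{L}$ bound, where one must be careful that the $\|f\|^2_{L^2}$ tail is accounted for; everything else is direct substitution, so the argument is essentially bookkeeping of the norms already established in Theorem \ref{th1}.
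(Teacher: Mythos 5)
Your overall strategy is the same as the paper's: both arguments reduce the $\mathcal{L}$-Sobolev norms in \eqref{est1}--\eqref{est4} to classical quantities via the identity $\mathcal{L}f=-f''+qf$ (giving $\|\mathcal{L}f\|^2_{L^2}\lesssim\|f''\|^2_{L^2}+\|q\|^2_{L^\infty}\|f\|^2_{L^2}$), Parseval's identity, and the normalisation $\lambda_n\geq 1$ drawn from \eqref{e-val}. The paper carries this out at the level of the Fourier coefficients $A_n,B_n$, while you package it as three embedding lemmas; that is a presentational difference only.

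There is, however, one step whose justification is wrong as written: the $W^1_\mathcal{L}$ bound. Your interpolation $\|\mathcal{L}^{1/2}f\|^2_{L^2}\leq\|\mathcal{L}f\|_{L^2}\|f\|_{L^2}$ leaves a bare $\|f\|^2_{L^2}$ after Young's inequality, and you propose to absorb it \textquotedblleft into the constant (since the implicit constants are allowed to depend on the operator)\textquotedblright. They are not: the corollary explicitly asserts that the constants are independent of $q$, and when $\|q\|_{L^\infty}$ is small the term $\|q\|^2_{L^\infty}\|u_0\|^2_{L^2}$ on the right of \eqref{ec2} cannot dominate a leftover $\|u_0\|^2_{L^2}$. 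The repair is immediate and is what the paper does: since $\lambda_n\geq 1$ implies $\lambda_n\leq\lambda_n^2$ (equivalently $\sqrt{\lambda_n}\,|f_n|\leq\lambda_n|f_n|$), one has $\|f\|_{W^1_\mathcal{L}}\leq\|\mathcal{L}f\|_{L^2}$ with no extra term, so no interpolation is needed; alternatively, the same inequality $\|f\|_{L^2}\leq\|\mathcal{L}f\|_{L^2}$ lets you dominate the leftover term by quantities already present on the right-hand side. The same remark applies to your parenthetical absorption in the derivation of \eqref{ec4}. Finally, note that your claim that $\|f\|^2_{W^{-1}_\mathcal{L}}\lesssim\|f\|^2_{L^2}$ because the eigenvalues are \textquotedblleft bounded below by a positive constant\textquotedblright\ hides a constant $\lambda_1^{-1}$ that in principle depends on $q$; the paper makes the same unexamined assumption $\lambda_n\geq1$, so this is a shared blemish rather than a defect of your argument, but it is worth being aware that the asymptotics \eqref{e-val} alone do not control the lowest eigenvalues uniformly in $q$.
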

\begin{proof}
By using inequality \eqref{25} we have
\begin{eqnarray}\label{26}
\|u(t, \cdot)\|^2_{L^2}&\leq& \sum\limits_{n=1}^\infty\left(\int\limits_0^1|A_n|^2|\phi_n(x)|^2dx+\int\limits_0^1\left|\frac{B_n}{\sqrt{\lambda_n}}\right|^2|\phi_n(x)|^2dx\right).
\end{eqnarray}
In Theorem \ref{th1} we obtained estimates with respect to the operator $\mathcal{L}$, but here we want to obtain estimates with respect to the initial data $(u_0,\, u_1)$ and potential $q(x)$. According to \eqref{e-val}, we can assume $\lambda_n\geq 1$, thus, we can use the estimate
\begin{equation}\label{B-lam}
\int\limits_0^1\left|\frac{B_n}{\sqrt{\lambda_n}}\right|^2|\phi_n(x)|^2dx\leq \int\limits_0^1|B_n|^2|\phi_n(x)|^2dx.
\end{equation}
Thus, by using the Parseval identity in \eqref{26} taking into account the last relation, we obtain
$$\|u(t, \cdot)\|^2_{L^2}\lesssim \sum\limits_{n=1}^\infty\left(|A_n|^2+|B_n|^2\right) = \|u_0\|^2_{L^2}+\|u_1\|^2_{L^2}.$$
By \eqref{t26} we have
\begin{equation*}
\|\partial_t u(t,\cdot)\|^2 \lesssim \int\limits_0^1\left(\sum\limits_{n=1}^\infty|\sqrt{\lambda_n} A_n |^2+\sum\limits_{n=1}^\infty|B_n|^2\right)dx.
\end{equation*}
The second term of this sum gives the norm of $\|u_1\|^2_{L^2}$ by the Parseval identity. Since $\lambda_n$ are eigenvalues of the operator $\mathcal{L}$, we obtain 
\begin{eqnarray*}\label{21}\sum\limits_{n=1}^\infty|\sqrt{\lambda_n}A_n|^2&=& \sum\limits_{n=1}^\infty\left|\sqrt{\lambda_n}\int\limits_0^1 u_0(x)\phi_n(x)dx\right|^2 \leq\sum\limits_{n=1}^\infty\left| \int\limits_0^1 \lambda_n u_0(x)\phi_n(x)dx\right|^2\nonumber\\
&=&\sum\limits_{n=1}^\infty\left| \int\limits_0^1 \left(-u''_0(x)+q(x)u_0(x)\right)\phi_n(x)dx\right|^2\\
&\lesssim& \sum\limits_{n=1}^\infty\left| \int\limits_0^1 u''_0(x)\phi_n(x)dx\right|^2+\sum\limits_{n=1}^\infty\left|\int\limits_0^1q(x)u_0(x)\phi_n(x)dx\right|^2.
      \end{eqnarray*}
Using Parseval's identity and since $q\in L^\infty$, we have
$$\sum\limits_{n=1}^\infty\left|\int\limits_0^1q(x)u_0(x)\phi_n(x)dx\right|^2=\sum\limits_{n=1}^\infty\left|\langle (q u_0),\phi_n\rangle\right|^2=\|q u_0\|^2_{L^2}\leq \|q\|^2_{L^\infty} \|u_0\|^2_{L^2},$$
and also we use Parseval's identity for the first term
$$\sum\limits_{n=1}^\infty\left| \int\limits_0^1 u''_0(x)\phi_n(x)dx\right|^2=\sum\limits_{n=1}^\infty |u_{0,n}''|^2=\|u_0''\|^2_{L^2},$$
therefore
\begin{eqnarray}\label{LA}\sum\limits_{n=1}^\infty|\sqrt{\lambda_n}A_n|^2&\lesssim& \|u_0''\|^2_{L^2}+\|q\|^2_{L^\infty}\|u_0\|^2_{L^2}.
\end{eqnarray}
Thus, 
$$\|\partial_t u(t,\cdot)\|^2_{L^2}\lesssim \|u''_0\|^2_{L^2}+\|q\|^2_{L^\infty}\|u_0\|^2_{L^2}+\|u_1\|^2_{L^2}.$$

From \eqref{u_x} we have
\begin{eqnarray*}
\|\partial_x u(t,\cdot)\|^2_{L^2}&=&\int\limits_0^1|\partial_xu(t,x)|^2dt\\
&=&
\int\limits_0^1\left|\sum\limits_{n=1}^\infty\left[A_n\cos\left(\sqrt{\lambda_n}t\right)+\frac{1}{\sqrt{\lambda_n}}B_n\sin \left(\sqrt{\lambda_n}t\right)\right]\phi'_n(x)\right|^2dx\\
&\lesssim&\sum\limits_{n=1}^\infty\left(\left|\sqrt{\lambda_n}A_n \right|^2+|B_n|^2\right)\left(1+\|\nu\|^2_{L^2}\right)\\
&+&\sum\limits_{n=1}^\infty\left(|A_n|^2+\left|\frac{1}{\sqrt{\lambda_n}}B_n\right|^2\right)\|\nu\|^2_{L^\infty}.
\end{eqnarray*}
Using \eqref{LA}, \eqref{B-lam} and Parseval's identity we get
\begin{eqnarray*}
\|\partial_x u(t,\cdot)\|^2_{L^2}&\lesssim&
\left(1+\|\nu\|^2_{L^2}\right)\left(\|u_0''\|^2_{L^2}+\|q\|^2_{L^\infty}\|u_0\|^2_{L^2}+\|u_1\|^2_{L^2}\right)\\
&+&\|\nu\|^2_{L^\infty}\left(\|u_0\|^2_{L^2}+\|u_1\|^2_{L^2}\right),
\end{eqnarray*}
implying \eqref{ec3}. From \eqref{u_xx} we have
\begin{eqnarray*}
\left\|\partial_x^2u(t, \cdot)\right\|^2_{L^2}&=&\int\limits_0^1\left|\partial^2_xu(t,x)\right|^2dx\\
&=&\int\limits_0^1\left|\sum\limits_{n=1}^\infty \left[A_n \cos \left(\sqrt{\lambda_n}t\right)+\frac{1}{\sqrt{\lambda_n}} B_n \sin\left(\sqrt{\lambda_n}t\right)\right]\phi''_n(x)\right|^2dx\\
&\lesssim& \|q\|^2_{L^\infty}\sum\limits_{n=1}^\infty \left(|A_n|^2 +\left|\frac{B_n}{\sqrt{\lambda_n}}\right|^2\right)+\sum\limits_{n=1}^\infty \left|\lambda_n A_n\right|^2 +\sum\limits_{n=1}^\infty\left|\sqrt{\lambda_n}B_n\right|^2.
\end{eqnarray*}
Using \eqref{LA} we get
$$\sum\limits_{n=1}^\infty \left|\lambda_n A_n\right|^2\lesssim \|u''_0\|^2_{L^2}+\|q\|^2_{L^\infty}\|u_0\|^2_{L^2},$$
and similarly
$$\sum\limits_{n=1}^\infty \left|\sqrt{\lambda_n} B_n\right|^2\leq \sum\limits_{n=1}^\infty \left|\lambda_n B_n\right|^2\lesssim \|u''_1\|^2_{L^2}+\|q\|^2_{L^\infty}\|u_1\|^2_{L^2}.$$
Thus, taking into account the last inequalities and \eqref{B-lam}, we obtain
$$\left\|\partial_x^2u(t, \cdot)\right\|^2_{L^2}\lesssim \|q\|^2_{L^\infty}\left(\|u_0\|^2_{L^2}+\|u_1\|^2_{L^2}\right)+\|u''_0\|^2_{L^2}+\|u''_1\|^2_{L^2}.$$
The proof of Corollary \ref{cor1} is complete.
\end{proof}

\section{Non-homogeneous equation case}

In this section, we are going to give brief ideas for how to deal with the non-homogeneous wave equation with initial/boundary conditions
\begin{equation}\label{nonh}
    \left\{\begin{array}{l}
    \partial^2_t u(t,x)+\mathcal{L} u(t,x)=f(t,x),\qquad (t,x)\in [0,T]\times (0,1),\\
    u(0,x)=u_0(x),\quad x\in (0,1),\\
    \partial_tu(0,x)=u_1(x),\quad x\in(0,1),\\
    u(t,0)=0=u(t,1),\quad t\in[0,T].
    \end{array}\right.
\end{equation}
where operator $\mathcal{L}$ is defined by 
$$\mathcal{L}=-\frac{\partial^2}{\partial x^2}+q(x),\qquad x\in(0,1).$$

\begin{thm}\label{non-hom}
Assume that $q \in L^\infty(0,1)$ and $f\in C^1([0,T],L^2(0,1))$. For any $k\in \mathbb{R}$ if the initial data satisfy $(u_0,\, u_1) \in W^{1+k}_\mathcal{L}\times W^k_\mathcal{L}$ then the non-homogeneous wave equation with initial/boundary conditions \eqref{nonh} has unique solution $u\in C([0,T], W^{1+k}_\mathcal{L})\cap C^1([0,T], W^{k}_\mathcal{L})$ which satisfies the estimates
\begin{equation}\label{es-nh1}
        \|u(t,\cdot)\|^2_{L^2}\lesssim \|u_0\|^2_{L^2}+\|u_1\|^2_{W^{-1}_\mathcal{L}}+2T^2\|f\|^2_{C([0,T],L^2(0,1))},
\end{equation}
\begin{equation}\label{es-nh2}
\|\partial_tu(t,\cdot)\|^2_{L^2}\lesssim \|u_0\|^2_{W^1_\mathcal{L}}+\|u_1\|^2_{L^2}+2T^2\|f\|^2_{C([0,T],L^2(0,1))},
\end{equation}
\begin{eqnarray}\label{es-nh3}
\|\partial_xu(t,\cdot)\|^2_{L^2}&\lesssim& \left(1+\|\nu\|^2_{L^2}\right)\left(\|u_0\|^2_{W^1_\mathcal{L}}+\|u_1\|^2_{L^2}+2T^2\|f\|^2_{C([0,T],L^2(0,1))}\right)\\
&+&\|\nu\|^2_{L^\infty}\left(\|u_0\|^2_{L^2}+\|u_1\|^2_{W^{-1}_{\mathcal{L}}}+2T^2\|f\|^2_{C([0,T],L^2(0,1))}\right),
\end{eqnarray}
\begin{eqnarray}\label{es-nh4}
\|\partial^2_xu(t,\cdot)\|^2_{L^2}&\lesssim& \|q\|^2_{L^\infty} \left(\|u_0\|^2_{L^2}+\|u_1\|^2_{W^{-1}_\mathcal{L}}+2T^2\|f\|^2_{C([0,T],L^2(0,1))}\right) \\
&+&\|u_0\|^2_{W^2_\mathcal{L}}+\|u_1\|^2_{W^1_\mathcal{L}}+2T^2\|f\|^2_{C^1([0,T],L^2(0,1))},
\end{eqnarray}
where the constants in these inequalities are independent of $u_0$, $u_1$, $q$ and $f$.
\end{thm}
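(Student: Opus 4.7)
The plan is to use separation of variables in the eigenbasis $\{\phi_n\}$ of $\mathcal{L}$ together with Duhamel's principle. Writing
$$u(t,x)=\sum_{n=1}^\infty u_n(t)\phi_n(x),\qquad f(t,x)=\sum_{n=1}^\infty f_n(t)\phi_n(x),\quad f_n(t)=\langle f(t,\cdot),\phi_n\rangle,$$
the PDE decouples into the scalar ODEs $u_n''(t)+\lambda_n u_n(t)=f_n(t)$ with $u_n(0)=A_n$ and $u_n'(0)=B_n$, where $A_n$, $B_n$ are as in the proof of Theorem~\ref{th1}. Duhamel's formula then gives
$$u_n(t)=A_n\cos(\sqrt{\lambda_n}t)+\tfrac{B_n}{\sqrt{\lambda_n}}\sin(\sqrt{\lambda_n}t)+D_n(t),\qquad D_n(t):=\tfrac{1}{\sqrt{\lambda_n}}\int_0^t\sin(\sqrt{\lambda_n}(t-s))\,f_n(s)\,ds.$$
The first two summands reproduce exactly the solution of the homogeneous problem handled by Theorem~\ref{th1}, so their contribution to each norm appearing in the statement is already controlled by \eqref{est1}--\eqref{est4}. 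The task reduces to bounding, in each of the four norms, the Duhamel contribution $\sum_n D_n(t)\phi_n(x)$.

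For the estimates \eqref{es-nh1}--\eqref{es-nh3} I will use only the trivial bounds $|\sin|,|\cos|\le 1$ together with Cauchy--Schwarz in $s$ and Parseval in $n$. Specifically I expect
$$|D_n(t)|^2\lesssim\tfrac{T}{\lambda_n}\int_0^T|f_n(s)|^2\,ds,\quad |\partial_t D_n(t)|^2\lesssim T\int_0^T|f_n(s)|^2\,ds,\quad |\sqrt{\lambda_n}\,D_n(t)|^2\lesssim T\int_0^T|f_n(s)|^2\,ds,$$
after absorbing the lower bound $\lambda_n\ge c>0$ (valid for all but finitely many indices) into the implicit constant. Summing in $n$ and using $\int_0^T\|f(s,\cdot)\|_{L^2}^2\,ds\le T\|f\|_{C([0,T],L^2)}^2$ produces the extra $T^2\|f\|_{C([0,T],L^2)}^2$ in each of the bounds for $\|u(t,\cdot)\|_{L^2}^2$, $\|\partial_t u(t,\cdot)\|_{L^2}^2$ and $\|\partial_x u(t,\cdot)\|_{L^2}^2$; for the last of these I plug the bound for $\sqrt{\lambda_n}\,D_n$ into the same decomposition $\phi_n'=\tfrac{\sqrt{\lambda_n}\,r_n\cos\theta_n}{\|\tilde\phi_n\|_{L^2}}+\nu\phi_n$ used in the proof of Theorem~\ref{th1}. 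This reproduces \eqref{es-nh1}--\eqref{es-nh3}.

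The main obstacle is the estimate \eqref{es-nh4}, because the identity $\phi_n''=(q-\lambda_n)\phi_n$ introduces a factor $\lambda_n$ in front of $D_n$, and the naive bound only controls $\sqrt{\lambda_n}\,D_n$ in $\ell^2(n)$, so $\sum_n|\lambda_n D_n(t)|^2$ would diverge. My remedy is an integration by parts in the time variable: using $\sin(\sqrt{\lambda_n}(t-s))=\partial_s\bigl[\tfrac{1}{\sqrt{\lambda_n}}\cos(\sqrt{\lambda_n}(t-s))\bigr]$ one obtains
$$\lambda_n D_n(t)=f_n(t)-\cos(\sqrt{\lambda_n}\,t)\,f_n(0)-\int_0^t\cos(\sqrt{\lambda_n}(t-s))\,\partial_s f_n(s)\,ds,$$
so that $|\lambda_n D_n(t)|\lesssim|f_n(t)|+|f_n(0)|+T\sup_{s\in[0,T]}|\partial_s f_n(s)|$. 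Squaring, summing in $n$ and applying Parseval yields $\sum_n|\lambda_n D_n(t)|^2\lesssim T^2\|f\|_{C^1([0,T],L^2)}^2$, which is precisely where the $C^1$-in-time hypothesis on $f$ enters and produces the last term of \eqref{es-nh4}. Uniqueness and the regularity $u\in C([0,T],W^{1+k}_\mathcal{L})\cap C^1([0,T],W^k_\mathcal{L})$ then follow, as in Theorem~\ref{th1}, from the uniqueness of the eigenfunction expansion and from applying the same integration-by-parts identity to $\lambda_n^{k/2}D_n$ to obtain the analogue of \eqref{est5}.
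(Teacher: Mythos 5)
Your proposal is correct and follows essentially the same route as the paper: eigenfunction expansion reducing to the ODEs $u_n''+\lambda_n u_n=f_n$, the Duhamel/variation-of-constants formula (your single convolution kernel $\sin(\sqrt{\lambda_n}(t-s))$ is just the compact form of the paper's two-term expression), crude bounds plus Cauchy--Schwarz and Parseval for \eqref{es-nh1}--\eqref{es-nh3}, and the same integration by parts in $s$ to trade the extra factor $\sqrt{\lambda_n}$ for a derivative of $f_n$ in \eqref{es-nh4}, which is exactly where the paper also invokes $f\in C^1([0,T],L^2)$.
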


\begin{proof}
We can use the eigenfunctions \eqref{sol-SL} of the corresponding (homogeneous) eigenvalue problem \eqref{4}, and look for a solution in the series form
\begin{equation}\label{nonhu}
    u(t,x)=\sum\limits_{n=1}^\infty u_n(t)\phi_n(x),
\end{equation}
where
$$u_n(t) =\int\limits_0^1 u(t,x)\phi_n(x)dx.$$
We can similarly expand the source function,
\begin{equation}\label{func}
    f(t,x)=\sum\limits_{n=1}^\infty f_n(t)\phi_n(x),\qquad f_n(t)=\int\limits_0^1f(t,x)\phi_n(x)dx.
\end{equation}

Now, since we are looking for a twice differentiable function $u(t,x)$ that satisfies the homogeneous
Dirichlet boundary conditions, we can differentiate the Fourier series \eqref{nonhu} term by term and using that the $\phi_n(x)$ satisfies the equation \eqref{4} to obtain
\begin{equation}\label{uxx}
    u_{xx}(t,x) = \sum\limits_{n=1}^\infty u_n(t)\phi''_n(x)=\sum\limits_{n=1}^\infty u_n(t)(q(x)-\lambda_n)\phi_n(x).
\end{equation}

We can also twice differentiate the series \eqref{func} with respect to $t$ to obtain
\begin{equation}\label{utt}
    u_{tt}(t,x) = \sum\limits_{n=1}^\infty u''_n(t)\phi_n(x),
\end{equation}
since the Fourier coefficients of $u_{tt}(t,x)$ are
$$\int\limits_0^1u_{tt}(t,x)\phi_n(x)dx=\frac{\partial^2}{\partial t^2}\left[\int\limits_0^1u(t,x)\phi_n(x)dx\right]=u''_n(t).$$
Differentiation under the above integral is allowed since the resulting integrand is continuous.

Substituting \eqref{utt} and \eqref{uxx} into the equation, and using \eqref{func}, we have
$$\sum\limits_{n=1}^\infty u''_n(t)\phi_n(x)-\sum\limits_{n=1}^\infty u_n(t)\left(q(x)-\lambda_n\right)\phi_n(x)+q(x)\sum\limits_{n=1}^\infty u_n(t)\phi_n(x)=\sum\limits_{n=1}^\infty f_n(t)\phi_n(x),$$
and after a slight rearrangement, we get
$$\sum\limits_{n=1}^\infty \left[u''_n(t)+\lambda_nu_n(t)\right]\phi_n(x)=\sum\limits_{n=1}^\infty f_n(t)\phi_n(x).$$
But then, due to the completeness,
$$u''_n(t)+\lambda_nu_n(t)=f_n(t), \qquad n=1,2,...,$$
which are ODEs for the coefficients $u_n(t)$ of the series \eqref{nonhu}. By the method of variation of constants we get 
\begin{eqnarray*}
u_n(t)&=&A_n\cos \left(\sqrt{\lambda_n}t\right)+\frac{1}{\sqrt{\lambda_n}}B_n \sin \left(\sqrt{\lambda_n}t\right)\\
&-&\frac{1}{\sqrt{\lambda_n}}\cos \left(\sqrt{\lambda_n}t\right)\int\limits_0^t \sin \left(\sqrt{\lambda_n}s\right)f_n(s)ds\\
&+& \frac{1}{\sqrt{\lambda_n}}\sin \left(\sqrt{\lambda_n}t\right)\int\limits_0^t \cos \left(\sqrt{\lambda_n}s\right)f_n(s)ds,
\end{eqnarray*}
where
$$A_n=\int\limits_0^1u_0(x)\phi_n(x)dx,\qquad B_n=\int\limits_0^1u_1(x)\phi_n(x)dx.$$
Thus, we can write a solution of the equation \eqref{nonh} in the form
\begin{eqnarray}\label{sol-nh}
u(t,x)&=&\sum\limits_{n=0}^\infty\left[A_n\cos\left(\sqrt{\lambda_n}t\right)+\frac{1}{\sqrt{\lambda_n}}B_n\sin \left(\sqrt{\lambda_n}t\right)\right]\phi_n(x) \nonumber\\
&-& \sum\limits_{n=1}^\infty\frac{1}{\sqrt{\lambda_n}}\cos\left(\sqrt{\lambda_n}t\right)\int\limits_0^t\sin\left(\sqrt{\lambda_n}s\right)f_n(s)ds\phi_n(x)\nonumber\\
&+&\sum\limits_{n=1}^\infty\frac{1}{\sqrt{\lambda_n}}\sin \left(\sqrt{\lambda_n}t\right)\int\limits_0^t \cos \left(\sqrt{\lambda_n}s\right)f_n(s)ds\phi_n(x).
\end{eqnarray}

Let us estimate $\|u(t,\cdot)\|_{L^2}$. For this we use the estimates
\begin{eqnarray}\label{est-nonh}
\int\limits_0^1|u(t,x)|^2dx&\lesssim&\int\limits_0^1\left|\sum\limits_{n=0}^\infty\left[A_n\cos\left(\sqrt{\lambda_n}t\right)+\frac{1}{\sqrt{\lambda_n}}B_n\sin \left(\sqrt{\lambda_n}t\right)\right]\phi_n(x)\right|^2dx \nonumber\\
&+& \int\limits_0^1\left|\sum\limits_{n=1}^\infty\frac{1}{\sqrt{\lambda_n}}\cos\left(\sqrt{\lambda_n}t\right)\int\limits_0^t\sin\left(\sqrt{\lambda_n}s\right)f_n(s)ds\phi_n(x)\right|^2dx\nonumber\\
&+&\int\limits_0^1\left|\sum\limits_{n=1}^\infty\frac{1}{\sqrt{\lambda_n}}\sin \left(\sqrt{\lambda_n}t\right)\int\limits_0^t \cos \left(\sqrt{\lambda_n}s\right)f_n(s)ds\phi_n(x)\right|^2dx\nonumber\\
&=&I_1+I_2+I_3.
\end{eqnarray}
For $I_1$ by using the \eqref{est1} for the homogeneous case we have that
$$I_1:=\int\limits_0^1\left|\sum\limits_{n=0}^\infty\left[A_n\cos\left(\sqrt{\lambda_n}t\right)+\frac{1}{\sqrt{\lambda_n}}B_n\sin \left(\sqrt{\lambda_n}t\right)\right]\phi_n(x)\right|^2dx\lesssim \|u_0\|^2_{L^2}+\|u_1\|^2_{W^{-1}_\mathcal{L}}.$$
Now we estimate $I_2$ in \eqref{est-nonh} as
$$I_2:=\int\limits_0^1\left|\sum\limits_{n=1}^\infty\frac{1}{\sqrt{\lambda_n}}\cos\left(\sqrt{\lambda_n}t\right)\int\limits_0^t\sin\left(\sqrt{\lambda_n}s\right)f_n(s)ds\phi_n(x)\right|^2dx \lesssim \sum\limits_{n=1}^\infty \left[\int\limits_0^t|f_n(s)|ds\right]^2.$$
Using Holder's inequality and taking into account that $t\in [0,T]$ we get
$$\left[\int\limits_0^t|f_n(s)|ds\right]^2\leq \left[\int\limits_0^T 1\cdot| f_n(t)|dt\right]^2\leq T\int\limits_0^T| f_n(t)|^2dt,$$
since $f_n(t)$ is the Fourier's coefficient of the function $f(t,x)$ and by Parseval's identity we obtain
$$
\sum\limits_{n=1}^\infty T\int\limits_0^T|f_n(t)|^2dt= T\int\limits_0^T\sum\limits_{n=1}^\infty|f_n(t)|^2dt = T\int\limits_0^T\|f(t,\cdot)\|^2_{L^2}dt.$$
Since
$$\|f\|_{C([0,1],L^2(0,1))}=\max\limits_{0\leq t\leq T}\|f(t,\cdot)\|_{L^2},$$
we will arrive at an inequality
$$T\int\limits_0^T\|f(t,\cdot)\|^2_{L^2}dt\leq T^2\|f\|^2_{C([0,T],L^2(0,1))}.$$
Thus,
\begin{eqnarray}\label{I2}
I_2&=&\int\limits_0^1\left|\sum\limits_{n=1}^\infty\frac{1}{\sqrt{\lambda_n}}\cos\left(\sqrt{\lambda_n}t\right)\int\limits_0^t\sin\left(\sqrt{\lambda_n}s\right)f_n(s)ds\phi_n(x)\right|^2dx\nonumber\\
&\lesssim&  T^2\|f\|^2_{C([0,T],L^2(0,1))},
\end{eqnarray}
and $I_3$ in \eqref{est-nonh} is evaluated similarly
\begin{eqnarray}\label{I3}
    I_3&:=&\int\limits_0^1\left|\sum\limits_{n=1}^\infty\frac{1}{\sqrt{\lambda_n}}\sin \left(\sqrt{\lambda_n}t\right)\int\limits_0^t \cos \left(\sqrt{\lambda_n}s\right)f_n(s)ds\phi_n(x)\right|^2dx \nonumber\\
    &\lesssim& T^2\|f\|^2_{C([0,T],L^2(0,1))}.
\end{eqnarray}

We finally get
$$
    \|u(t,\cdot)\|^2_{L^2}\lesssim \|u_0\|^2_{L^2}+\|u_1\|^2_{W^{-1}_\mathcal{L}}+2T^2\|f\|^2_{C([0,T],L^2(0,1))}.
$$
Let us estimate $\|\partial_tu(t,\cdot)\|_{L^2}$, for this we calculate $\partial_tu(t,x)$ as follows
\begin{eqnarray*}
\partial_tu(t,x)&=&\sum\limits_{n=0}^\infty\left[-\sqrt{\lambda_n}A_n\sin\left(\sqrt{\lambda_n}t\right)+B_n\cos \left(\sqrt{\lambda_n}t\right)\right]\phi_n(x) \\
&+& \sum\limits_{n=1}^\infty\sin\left(\sqrt{\lambda_n}t\right)\int\limits_0^t\sin\left(\sqrt{\lambda_n}s\right)f_n(s)ds\phi_n(x)\\
&+&
\sum\limits_{n=1}^\infty\cos \left(\sqrt{\lambda_n}t\right)\int\limits_0^t \cos \left(\sqrt{\lambda_n}s\right)f_n(s)ds\phi_n(x),
\end{eqnarray*}
then
\begin{eqnarray*}
\|\partial_tu(t,\cdot)\|^2_{L^2}&=&\int\limits_0^1|\partial_tu(t,x)|^2dx\\
&\lesssim&\int\limits_0^1\left|\sum\limits_{n=0}^\infty\left[-\sqrt{\lambda_n}A_n\sin\left(\sqrt{\lambda_n}t\right)+B_n\cos \left(\sqrt{\lambda_n}t\right)\right]\phi_n(x)\right|^2dx \\
&+& \int\limits_0^1\left|\sum\limits_{n=1}^\infty\sin\left(\sqrt{\lambda_n}t\right)\int\limits_0^t\sin\left(\sqrt{\lambda_n}s\right)f_n(s)ds\phi_n(x)\right|^2dx\\
&+&
\int\limits_0^1\left|\sum\limits_{n=1}^\infty\cos \left(\sqrt{\lambda_n}t\right)\int\limits_0^t \cos \left(\sqrt{\lambda_n}s\right)f_n(s)ds\phi_n(x)\right|^2dx\\
&\lesssim&\sum\limits_{n=0}^\infty\left|-\sqrt{\lambda_n}A_n\sin\left(\sqrt{\lambda_n}t\right)+B_n\cos \left(\sqrt{\lambda_n}t\right)\right|^2 \\
&+& \sum\limits_{n=1}^\infty\left|\sin\left(\sqrt{\lambda_n}t\right)\int\limits_0^t\sin\left(\sqrt{\lambda_n}s\right)f_n(s)ds\right|^2\\
&+&\sum\limits_{n=1}^\infty\left|\cos \left(\sqrt{\lambda_n}t\right)\int\limits_0^t \cos \left(\sqrt{\lambda_n}s\right)f_n(s)ds\right|^2,
\end{eqnarray*}
by using the \eqref{est2} for the homogeneous case and conducting evaluations as in \eqref{I2}, \eqref{I3} we obtain
$$\|\partial_tu(t,\cdot)\|^2_{L^2}\lesssim \|u_0\|^2_{W^1_\mathcal{L}}+\|u_1\|^2_{L^2}+2T^2\|f\|^2_{C([0,T],L^2(0,1))}.$$

Let us carry out next estimate
\begin{eqnarray*}
\|\partial_xu(t,\cdot)\|^2_{L^2}&=&\int\limits_0^1|\partial_xu(t,x)|^2dx\\
&\lesssim& \int\limits_0^1\left|\sum\limits_{n=0}^\infty\left[A_n\cos\left(\sqrt{\lambda_n}t\right)+\frac{1}{\sqrt{\lambda_n}}B_n\sin \left(\sqrt{\lambda_n}t\right)\right]\phi'_n(x)\right|^2dx \nonumber\\
&+& \int\limits_0^1\left|\sum\limits_{n=1}^\infty\frac{1}{\sqrt{\lambda_n}}\cos\left(\sqrt{\lambda_n}t\right)\int\limits_0^t\sin\left(\sqrt{\lambda_n}s\right)f_n(s)ds\phi'_n(x)\right|^2dx\nonumber\\
&+&\int\limits_0^1\left|\sum\limits_{n=1}^\infty\frac{1}{\sqrt{\lambda_n}}\sin \left(\sqrt{\lambda_n}t\right)\int\limits_0^t \cos \left(\sqrt{\lambda_n}s\right)f_n(s)ds\phi'_n(x)\right|^2dx.
\end{eqnarray*}
Using \eqref{est3} for the homogeneous case and estimating as in \eqref{I2}, \eqref{I3} we get
\begin{eqnarray*}
\|\partial_xu(t,\cdot)\|^2_{L^2}&\lesssim& \left(1+\|\nu\|^2_{L^2}\right)\left(\|u_0\|^2_{W^1_\mathcal{L}}+\|u_1\|^2_{L^2}+2T^2\|f\|^2_{C([0,T],L^2(0,1))}\right)\\
&+&\|\nu\|^2_{L^\infty}\left(\|u_0\|^2_{L^2}+\|u_1\|^2_{W^{-1}_{\mathcal{L}}}+2T^2\|f\|^2_{C([0,T],L^2(0,1))}\right).
\end{eqnarray*}

For the estimate $\|\partial^2_xu(t,\cdot)\|_{L^2}$ we use that $\phi_n''(x)=(q(x)-\lambda_n)\phi_n(x)$, to deduce
\begin{eqnarray*}
\|\partial^2_xu(t,\cdot)\|^2_{L^2}&=&\int\limits_0^1|\partial^2_xu(t,x)|^2dx\\
&\lesssim& \int\limits_0^1\left|\sum\limits_{n=0}^\infty\left[A_n\cos\left(\sqrt{\lambda_n}t\right)+\frac{1}{\sqrt{\lambda_n}}B_n\sin \left(\sqrt{\lambda_n}t\right)\right](q(x)-\lambda_n)\phi_n(x)\right|^2dx \\
&+& \int\limits_0^1\left|\sum\limits_{n=1}^\infty\frac{1}{\sqrt{\lambda_n}}\cos\left(\sqrt{\lambda_n}t\right)\int\limits_0^t\sin\left(\sqrt{\lambda_n}s\right)f_n(s)ds(q(x)-\lambda_n)\phi_n(x)\right|^2dx\\
&+&\int\limits_0^1\left|\sum\limits_{n=1}^\infty\frac{1}{\sqrt{\lambda_n}}\sin \left(\sqrt{\lambda_n}t\right)\int\limits_0^t \cos \left(\sqrt{\lambda_n}s\right)f_n(s)ds(q(x)-\lambda_n)\phi_n(x)\right|^2dx,
\end{eqnarray*}
and using \eqref{est4}, \eqref{I2}, \eqref{I3} we arrive at the estimates
\begin{eqnarray*}
\|\partial^2_xu(t,\cdot)\|^2_{L^2}&\lesssim& \|q\|^2_{L^\infty} \left(\|u_0\|^2_{L^2}+\|u_1\|^2_{W^{-1}_\mathcal{L}}+2T^2\|f\|^2_{C([0,T],L^2(0,1))}\right)\\
&+&\|u_0\|^2_{W^2_\mathcal{L}}+\|u_1\|^2_{W^1_\mathcal{L}}+J_1+J_2,
\end{eqnarray*}
where
$$J_1:=\int\limits_0^1\left|\sum\limits_{n=1}^\infty \sqrt{\lambda_n}\cos\left(\sqrt{\lambda_n}t\right)\int\limits_0^t \sin\left(\sqrt{\lambda_n}s\right)f_n(s)ds\phi_n(x)\right|^2dx$$
$$J_2:=\int\limits_0^1\left|\sum\limits_{n=1}^\infty \sqrt{\lambda_n}\cos\left(\sqrt{\lambda_n}t\right)\int\limits_0^t \sin\left(\sqrt{\lambda_n}s\right)f_n(s)ds\phi_n(x)\right|^2dx.$$
Let us estimate $J_1$
\begin{eqnarray*}
J_1&\lesssim& \int\limits_0^1\sum\limits_{n=1}^\infty \left|\sqrt{\lambda_n}\int\limits_0^t \sin\left(\sqrt{\lambda_n}s\right)f_n(s)ds\phi_n(x)\right|^2dx\\
&=&\sum\limits_{n=1}^\infty \left|\sqrt{\lambda_n}\int\limits_0^t \sin\left(\sqrt{\lambda_n}s\right)f_n(s)ds\right|^2\int\limits_0^1|\phi_n(x)|^2dx\\
&=&\sum\limits_{n=1}^\infty \left|\sqrt{\lambda_n}\int\limits_0^t \sin\left(\sqrt{\lambda_n}s\right)f_n(s)ds\right|^2,
\end{eqnarray*}
integrating by part, using Parseval's identity and conducting evaluations as in \eqref{I2}, we get 
\begin{eqnarray*}
J_1&\lesssim& \sum\limits_{n=1}^\infty \left|-\cos\left(\sqrt{\lambda_n}s\right)f_n(s)\bigg|_0^t+\int\limits_0^t\cos\left(\sqrt{\lambda_n}s\right)f'_n(s)ds\right|^2\\
&\lesssim& \sum\limits_{n=1}^\infty \left|\cos\left(\sqrt{\lambda_n}t\right)f_n(t)\right|^2+\sum\limits_{n=1}^\infty\left|f_n(0)\right|^2+\sum\limits_{n=1}^\infty\left|\int\limits_0^tf'_n(s)ds\right|^2\\
&\leq& \|f(t,\cdot)\|^2_{L^2}+\|f(0,\cdot)\|^2_{L^2}+T^2\|f'\|^2_{C([0,T],L^2(0,1))}\leq T^2\|f\|^2_{C^1([0,T],L^2(0,1))}.
\end{eqnarray*}
Using the respectively estimates for $J_2$ we get 
$$J_2\lesssim T^2\|f\|^2_{C^1([0,T],L^2(0,1))}.$$
Therefore,
\begin{eqnarray*}
\|\partial^2_xu(t,\cdot)\|^2_{L^2}&\lesssim& \|q\|^2_{L^\infty} \left(\|u_0\|^2_{L^2}+\|u_1\|^2_{W^{-1}_\mathcal{L}}+2T^2\|f\|^2_{C([0,T],L^2(0,1))}\right) \\
&+&\|u_0\|^2_{W^2_\mathcal{L}}+\|u_1\|^2_{W^1_\mathcal{L}}+2T^2\|f\|^2_{C^1([0,T],L^2(0,1))}.
\end{eqnarray*}
\end{proof}

\begin{cor}\label{cor2}
Assume that $q\in L^2(0,1)$ and $f\in C^1([0,T],L^2(0,1))$. If the initial data satisfy $(u_0,\, u_1) \in L^2(0,1)$ and $(u_0'',\, u''_1)\in L^2(0,1)$, then the non-homogeneous wave equation with initial/boundary conditions  \eqref{nonh} has unique solution $u\in C([0,T], L^2(0,1))$ such that
\begin{equation}\label{ec-nh1}
    \|u(t,\cdot)\|^2_{L^2}\lesssim \|u_0\|^2_{L^2}+\|u_1\|^2_{L^2}+2T^2\|f\|_{C([0,1],L^2(0,1))},
\end{equation}
\begin{equation}\label{ec-nh2}
\|\partial_t u(t,\cdot)\|^2_{L^2}\lesssim \|u''_0\|^2_{L^2}+\|q\|^2_{L^\infty}\|u_0\|^2_{L^2}+\|u_1\|^2_{L^2}+2T^2\|f\|_{C([0,1],L^2(0,1))},
\end{equation}
\begin{eqnarray}\label{ec-nh3}
\|\partial_x u(t,\cdot)\|^2_{L^2}
&\lesssim& \left(1+\|\nu\|^2_{L^2}\right)\left(\|u_0''\|^2_{L^2}+\|q\|^2_{L^\infty}\|u_0\|^2_{L^2}+\|u_1\|^2_{L^2}+2T^2\|f\|_{C([0,1],L^2(0,1))}\right)\nonumber\\
&+&\|\nu\|^2_{L^\infty}\left(\|u_0\|^2_{L^2}+\|u_1\|^2_{L^2}+2T^2\|f\|_{C([0,1],L^2(0,1))}\right),
\end{eqnarray}
\begin{eqnarray}\label{ec-nh4}
\left\|\partial_x^2u(t, \cdot)\right\|^2_{L^2}&\lesssim&\|q\|^2_{L^\infty}\left(\|u_0\|^2_{L^2}+\|u_1\|^2_{L^2}+2T^2\|f\|_{C([0,1],L^2(0,1))}\right)\nonumber\\
&+&\|u''_0\|^2_{L^2}+\|u''_1\|^2_{L^2}+2T^2\|f\|_{C([0,1],L^2(0,1))},
\end{eqnarray}
where the constants in these inequalities are independent of $u_0$, $u_1$, $q$ and $f$.
\end{cor}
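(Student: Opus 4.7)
The plan is to derive Corollary \ref{cor2} as a direct consequence of Theorem \ref{non-hom} by rewriting the right-hand sides, which are expressed in terms of the $\mathcal{L}$-Sobolev norms $W^k_\mathcal{L}$, purely in terms of $L^2$ norms of $u_0, u_1, u_0'', u_1''$ and $L^\infty$ norms of $q$ and $\nu$. The key mechanism is exactly the one used in the proof of Corollary \ref{cor1}: pass the Fourier coefficients $A_n, B_n$ through the identity $\lambda_n A_n = \int_0^1 \mathcal{L}u_0(x)\phi_n(x)\,dx = \int_0^1(-u_0''(x) + q(x)u_0(x))\phi_n(x)\,dx$ and then apply Parseval.

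First, I would invoke Theorem \ref{non-hom} in the particular case $k=0$ and $k=1$, which gives the four estimates \eqref{es-nh1}--\eqref{es-nh4}. Then, using $\lambda_n \geq 1$ (a consequence of \eqref{e-val} for $n$ sufficiently large, with the finitely many smaller eigenvalues absorbed into the implicit constant), I would bound the negative-order norm $\|u_1\|^2_{W^{-1}_\mathcal{L}} = \sum |B_n|^2/\lambda_n \leq \sum |B_n|^2 = \|u_1\|^2_{L^2}$, immediately converting \eqref{es-nh1} into \eqref{ec-nh1}. This negative-order bound will be used throughout, in every place where $\|u_1\|^2_{W^{-1}_\mathcal{L}}$ appears on the right-hand side of Theorem \ref{non-hom}.

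Next, for the positive-order norms, I would replicate the computation \eqref{LA} from the proof of Corollary \ref{cor1}: expanding $\|u_0\|^2_{W^1_\mathcal{L}} = \sum \lambda_n |A_n|^2$ as a Fourier series of $\mathcal{L}u_0 = -u_0''+qu_0$ and applying Parseval yields $\|u_0\|^2_{W^1_\mathcal{L}} \lesssim \|u_0''\|^2_{L^2} + \|q\|^2_{L^\infty}\|u_0\|^2_{L^2}$. The analogous identity $\lambda_n^2 A_n = \int_0^1(\mathcal{L}u_0)^2\cdot\phi_n$-style manipulation, combined with Parseval, gives $\|u_0\|^2_{W^2_\mathcal{L}} = \sum \lambda_n^2 |A_n|^2 \lesssim \|u_0''\|^2_{L^2} + \|q\|^2_{L^\infty}\|u_0\|^2_{L^2}$, and similarly $\|u_1\|^2_{W^1_\mathcal{L}} \lesssim \|u_1''\|^2_{L^2} + \|q\|^2_{L^\infty}\|u_1\|^2_{L^2}$. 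Plugging these conversions into \eqref{es-nh2}, \eqref{es-nh3}, \eqref{es-nh4} yields \eqref{ec-nh2}, \eqref{ec-nh3}, \eqref{ec-nh4}, with the forcing contributions $2T^2\|f\|^2_{C([0,T],L^2(0,1))}$ and $2T^2\|f\|^2_{C^1([0,T],L^2(0,1))}$ passing through unchanged since they are already stated in $L^2$-based norms.

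The main obstacle is essentially bookkeeping rather than analysis: there is nothing genuinely new beyond Theorem \ref{non-hom} and the substitution trick of Corollary \ref{cor1}, but one must be careful that the bound $\|u_0\|^2_{W^2_\mathcal{L}} \lesssim \|u_0''\|^2_{L^2} + \|q\|^2_{L^\infty}\|u_0\|^2_{L^2}$ really follows from the identity $\mathcal{L}u_0 = -u_0''+qu_0$ applied term-by-term, which requires $u_0 \in \mathrm{Dom}(\mathcal{L})$; this is where the hypothesis $u_0'' \in L^2(0,1)$ (together with $q \in L^\infty$ so that $qu_0 \in L^2$) enters. Uniqueness and the regularity claim $u \in C([0,T],L^2(0,1))$ follow from Theorem \ref{non-hom} for the same reason, since the solution constructed there is the only one in its class and $L^2$-convergence of \eqref{sol-nh} in $x$, continuous in $t$, is already established.
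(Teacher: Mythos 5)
Your proposal is correct and follows essentially the same route as the paper, whose proof of Corollary \ref{cor2} is precisely the one-line observation that it follows from Theorem \ref{non-hom} combined with the norm conversions of Corollary \ref{cor1} (namely $\lambda_n\geq 1$ to absorb the $W^{-1}_\mathcal{L}$ norms and $\lambda_n A_n=\langle \mathcal{L}u_0,\phi_n\rangle$ with Parseval to replace the positive-order norms by $\|u_0''\|_{L^2}$, $\|u_1''\|_{L^2}$ and $\|q\|_{L^\infty}$ terms). Your added remark about needing $u_0\in\mathrm{Dom}(\mathcal{L})$, hence $u_0''\in L^2$ and $q\in L^\infty$, is a sound clarification of a hypothesis the paper states somewhat inconsistently.
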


The proof of Corollary \ref{cor2} immediately follows from Corollary \ref{cor1} and the proof of Theorem \ref{non-hom} .

\section{Very weak solutions}

In this section we will analyse the solutions for less regular potentials $q$. For this we will be using the notion of very weak solutions. 

Assume that the coefficient $q$ and initial data $(u_0,\, u_1)$ are the distributions on $(0,1)$. 

\begin{defn}\label{D1} (i)
A net of functions $\left(u_\varepsilon=u_\varepsilon(t,x)\right)$ is said to be $L^2$-moderate if there exist $N\in \mathbb{N}_0$ and $C>0$ such that 
$$\|u_\varepsilon\|_{L^2}\leq C \varepsilon^{-N}.$$
(ii) A net of functions $\left(q_\varepsilon=q_\varepsilon(x)\right)$ is said to be $L^\infty$-moderate if there exist $N\in \mathbb{N}_0$ and $C>0$ such that 
$$\|q_\varepsilon\|_{L^\infty}\leq C \varepsilon^{-N}.$$
\end{defn}

\begin{rem} We note that such assumptions are natural for distributional coefficients in the sense that regularisations of distributions are moderate. Precisely, by the structure theorems for distributions (see, e.g. \cite{Friedlander}), we know that distributions 
\begin{equation}\label{moder}
  \mathcal{D}'(0,1) \subset \{L^\infty(0,1) -\text{moderate families} \},  
\end{equation}
and we see from \eqref{moder}, that a solution to an initial/boundary problem may not exist in the sense of distributions, while it may exist in the set of $L^\infty$-moderate functions. 
\end{rem}

To give an example, let us take $f\in L^2(0,1)$, $f:(0,1)\to \mathbb{C}$. We introduce the function 
$$\Tilde{f}=\left\{\begin{array}{l}
    f, \text{ on }(0,1),  \\
    0,  \text{ on }\mathbb{R} \setminus (0,1),
\end{array}\right.$$
then $\Tilde{f}:\mathbb{R}\to \mathbb{C}$, and $\Tilde{f}\in \mathcal{E}'(\mathbb{R}).$

Let $\Tilde{f}_\varepsilon=\Tilde{f}*\psi_\varepsilon$ be obtained as the convolution of $\Tilde{f}$ with a Friedrich mollifier $\psi_\varepsilon$, where 
$$\psi_\varepsilon(x)=\frac{1}{\varepsilon}\psi\left(\frac{x}{\varepsilon}\right),\quad \text{for}\,\, \psi\in C^\infty_0(\mathbb{R}),\, \int \psi=1. $$
Then the regularising net $(\Tilde{f}_\varepsilon)$ is $L^p$-moderate for any $p \in [1,\infty)$, and it approximates $f$ on $(0,1)$:
$$0\leftarrow \|\Tilde{f}_\varepsilon-\Tilde{f}\|^p_{L^p(\mathbb{R})}\approx \|\Tilde{f}_\varepsilon-f\|^p_{L^p(0,1)}+\|\Tilde{f}_\varepsilon\|^p_{L^p(\mathbb{R}\setminus (0,1))}.$$
Now, let us introduce the notion of a very weak solution to the initial/boundary problem \eqref{C.p1}-\eqref{C.p3}.

\begin{defn}\label{D2}
Let $q\in \mathcal{D}'(0,1)$.  The net $(u_\varepsilon)_{\varepsilon>0}$ is said to be a very weak solution to the initial/boundary problem  \eqref{C.p1}-\eqref{C.p3} if there exists an $L^\infty$-moderate regularisation $q_\varepsilon$ of $q$ and $L^2$-moderate regularisations $u_{0,\varepsilon},\,u_{1,\varepsilon}$ of $u_0,\, u_1$, respectively, such that
\begin{equation}\label{vw1}
         \left\{\begin{array}{l}\partial^2_t u_\varepsilon(t,x)-\partial^2_x u_\varepsilon(t,x)+q_\varepsilon(x) u_\varepsilon(t,x)=0,\quad (t,x)\in [0,T]\times(0,1),\\
 u_\varepsilon(0,x)=u_{0,\varepsilon}(x),\,\,\, x\in (0,1), \\
\partial_t u_\varepsilon(0,x)=u_{1,\varepsilon}(x), \,\,\, x\in (0,1),\\
u_\varepsilon(t,0)=0=u_\varepsilon(t,1), \quad t\in[0,T],
\end{array}\right.\end{equation}
and $(u_\varepsilon)$ and $(\partial_t u_\varepsilon)$ are $L^{2}$-moderate.
\end{defn}

Then we have the following properties of very weak solutions.

\begin{thm}[Existence]\label{Ext}
Let the coefficient $q$ and initial data $(u_0,\, u_1)$ be distributions in $(0,1)$. Then the initial/boundary problem  \eqref{C.p1}-\eqref{C.p3} has a very weak solution.
\end{thm}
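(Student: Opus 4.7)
The plan is to build the very weak solution by regularising the distributional data, invoking Corollary \ref{cor1} for each regularised problem, and then verifying that the resulting net is $L^{2}$-moderate along with its time derivative.

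First I would extend $q$, $u_0$, $u_1$ from $(0,1)$ to compactly supported distributions on $\mathbb{R}$ (zero-extending the primitive $\nu$ of $q$, as in the example preceding Definition \ref{D2}, so that a compactly supported extension $\widetilde{\nu}$ is obtained, with $\widetilde{q}=\widetilde{\nu}'$). Fixing a Friedrichs mollifier $\psi\in C^\infty_0(\mathbb{R})$ with $\int\psi=1$ and letting $\psi_\varepsilon(x)=\varepsilon^{-1}\psi(x/\varepsilon)$, I would set
$$q_\varepsilon:=(\widetilde{q}*\psi_\varepsilon)\big|_{(0,1)},\quad \nu_\varepsilon:=(\widetilde{\nu}*\psi_\varepsilon)\big|_{(0,1)},\quad u_{j,\varepsilon}:=(\widetilde{u}_j*\psi_\varepsilon)\big|_{(0,1)},\ j=0,1.$$
By the structure theorem for compactly supported distributions (see \cite{Friedlander}), every such net is moderate: there exist $N\in\mathbb{N}_0$ and $C>0$ such that
$$\|q_\varepsilon\|_{L^\infty}+\|\nu_\varepsilon\|_{L^\infty}+\|\nu_\varepsilon\|_{L^2}\leq C\varepsilon^{-N},$$
$$\|u_{0,\varepsilon}\|_{L^2}+\|u_{1,\varepsilon}\|_{L^2}+\|u''_{0,\varepsilon}\|_{L^2}+\|u''_{1,\varepsilon}\|_{L^2}\leq C\varepsilon^{-N}.$$

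Next, for each fixed $\varepsilon>0$ the coefficient $q_\varepsilon$ lies in $L^\infty(0,1)$ and the (twice differentiable) data lie in $L^2(0,1)$, so Corollary \ref{cor1} applies and produces a unique classical solution $u_\varepsilon\in C([0,T],L^2(0,1))$ of the regularised initial/boundary problem \eqref{vw1} together with the estimates \eqref{ec1}--\eqref{ec4}, whose constants are independent of $\varepsilon$. Inserting the moderate bounds on the data and on $q_\varepsilon$, $\nu_\varepsilon$ into \eqref{ec1}--\eqref{ec2} gives
$$\|u_\varepsilon(t,\cdot)\|_{L^2}^2+\|\partial_t u_\varepsilon(t,\cdot)\|_{L^2}^2\leq C\varepsilon^{-N'}$$
uniformly in $t\in[0,T]$, for a new $N'\in\mathbb{N}_0$. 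Hence both $(u_\varepsilon)$ and $(\partial_t u_\varepsilon)$ are $L^2$-moderate, and by Definition \ref{D2} the net $(u_\varepsilon)$ qualifies as a very weak solution to \eqref{C.p1}--\eqref{C.p3}.

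The main obstacle I anticipate is not the solvability step, which is a direct application of Corollary \ref{cor1}, but the careful bookkeeping of the regularisation of the potential: the estimates from the regular theory depend on $q$ through both $\|q\|_{L^\infty}$ and the auxiliary quantities $\|\nu\|_{L^2}$, $\|\nu\|_{L^\infty}$, so one must regularise the primitive $\nu$ (rather than $q$ itself) and then use $q_\varepsilon=\partial_x\nu_\varepsilon$, ensuring that a single power $\varepsilon^{-N}$ controls all three norms simultaneously. Once this is arranged, moderateness propagates automatically through the polynomial structure of \eqref{ec1}--\eqref{ec4}, and the existence proof reduces to recording the powers.
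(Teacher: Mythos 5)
Your proposal is correct and follows essentially the same route as the paper: mollify the data to obtain moderate nets, apply the regular theory (Theorem \ref{th1} and Corollary \ref{cor1}) for each fixed $\varepsilon$, and read off the $L^2$-moderateness of $(u_\varepsilon)$ and $(\partial_t u_\varepsilon)$ from the estimates \eqref{ec1}--\eqref{ec2}. In fact you are slightly more careful than the paper, which invokes $\|u''_{0,\varepsilon}\|_{L^2}$ and the $\nu$-norms in its bounds without explicitly recording that these too are moderate; your remark about regularising the primitive $\nu$ so that a single power $\varepsilon^{-N}$ controls all the relevant norms fills exactly that gap.
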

\begin{proof}
Since the formulation of \eqref{C.p1}-\eqref{C.p3} in this case might be impossible in the distributional sense due to issues related to the product of distributions, we replace \eqref{C.p1}-\eqref{C.p3} with a regularised equation. In other words, we regularize $q$, $u_0$ and $u_1$ by some corresponding sets $q_\varepsilon$, $u_{0,\varepsilon}$ and $u_{1,\varepsilon}$ of smooth functions from $ C^ \infty(0,1)$.

Hence, $q_\varepsilon$, $u_{0,\varepsilon}$ and $u_{1,\varepsilon}$ are $L^\infty$ and $L^2$-moderate regularisations of the coefficient $q$ and the Cauchy data $(u_0,u_1)$ respectively. So by Definition 1 there exist $N\in \mathbb{N}_0$ and $C_1>0,\,C_2>0,\,C_3>0$, such that
$$\|q_\varepsilon\|_{L^\infty}\leq C_1\varepsilon^{-N},\quad \|u_{0,\varepsilon}\|_{L^2}\leq C_2\varepsilon^{-N}, \quad \|u_{1,\varepsilon}\|_{L^2}\leq C_3\varepsilon^{-N}.$$

Now we fix $\varepsilon\in (0,1]$, and consider the regularised problem \eqref{vw1}. Then all discussions and calculations of Theorem \ref{th1} are valid. Thus, by Theorem \ref{th1}, the equation \eqref{vw1} has unique solution $u_\varepsilon(t,x)$ in the space $C^0([0,T];C^\infty(0,1))\cap C^1([0,T];C^\infty(0,1))$.

By Corollary \ref{cor1} and there exist $N\in \mathbb{N}_0$ and $C>0$, such that
$$\|u_\varepsilon(t,\cdot)\|_{L^2}\lesssim \|u_{0,\varepsilon}\|_{L^2}+\|u_{1,\varepsilon}\|_{L^2}\leq C\varepsilon^{-N},$$
$$\|\partial_t u_\varepsilon(t,\cdot)\|_{L^2}\lesssim \|u''_{0,\varepsilon}\|_{L^2}+\|q_\varepsilon\|_{L^\infty}\|u_{0, \varepsilon}\|_{L^2}+\|u_{1,\varepsilon}\|_{L^2}\leq C\varepsilon^{-N},$$
where the constants in these inequalities are independent of $u_0$, $u_1$, $q$.
Hence, $(u_\varepsilon)$ is moderate, and the proof of Theorem \ref{Ext} is complete.
\end{proof}

Describing the uniqueness of the very weak solutions amounts to “measuring” the changes of involved associated nets: negligibility conditions for nets of functions/distributions read as follows:

\begin{defn}[Negligibility]\label{D3}
Let $(u_\varepsilon)$, $(\Tilde{u}_\varepsilon)$ be two nets in $C^\infty(0,1)$. Then, the net $(u_\varepsilon-\Tilde{u}_\varepsilon)$ is called $L^2$-negligible, if for every $N\in \mathbb{N}$ there exist $C>0$ such that the following condition is satisfied
$$\|u_\varepsilon-\Tilde{u}_\varepsilon\|_{L^2}\leq C \varepsilon^N,$$
for all $\varepsilon\in (0,1]$. In the case where $u_\varepsilon=u_\varepsilon(t,x)$ is a net depending on $t\in [0,T]$, then the negligibility condition can be described as 
$$\|u_\varepsilon(t,\cdot)-\Tilde{u}_\varepsilon(t,\cdot)\|_{L^2}\leq C \varepsilon^N,$$
uniformly in $t\in [0,T]$. The constant $C$ can depends on $N$ but not on $\varepsilon$.
\end{defn}

Let us state the  \textquotedblleft$\varepsilon$-parameterised problems\textquotedblright \,to be considered:
\begin{equation}\label{un1}
    \left\{\begin{array}{l}\partial^2_t u_\varepsilon(t,x)-\partial^2_x u_\varepsilon(t,x)+q_\varepsilon(x) u_\varepsilon(t,x)=0,\quad (t,x)\in [0,T]\times(0,1),\\
 u_\varepsilon(0,x)=u_{0,\varepsilon}(x),\,\,\, x\in (0,1), \\
\partial_t u_\varepsilon(0,x)=u_{1,\varepsilon}(x), \,\,\, x\in (0,1),\\
u_\varepsilon(t,0)=0=u_\varepsilon(t,1),\quad t\in[0,T],
\end{array}\right.
\end{equation}
and
\begin{equation}\label{un2}
    \left\{\begin{array}{l}\partial^2_t \Tilde{u}_\varepsilon(t,x)-\partial^2_x \Tilde{u}_\varepsilon(t,x)+\Tilde{q}_\varepsilon(x) \Tilde{u}_\varepsilon(t,x)=0,\quad (t,x)\in [0,T]\times(0,1),\\
 \Tilde{u}_\varepsilon(0,x)=\Tilde{u}_{0,\varepsilon}(x),\,\,\, x\in (0,1), \\
\partial_t \Tilde{u}_\varepsilon(0,x)=\Tilde{u}_{1,\varepsilon}(x), \,\,\, x\in (0,1),\\
\Tilde{u}_\varepsilon(t,0)=0=\Tilde{u}_\varepsilon(t,1), \quad t\in[0,T].
\end{array}\right.
\end{equation}

\begin{defn}[Uniqueness of the very weak solution]\label{D4}
Let $q\in \mathcal{D}'(0,1)$. We say that initial/boundary problem \eqref{C.p1}-\eqref{C.p3} has an unique very weak solution, if
for all $L^\infty$-moderate nets $q_\varepsilon$, $\Tilde{q}_\varepsilon$, such that $(q_\varepsilon-\Tilde{q}_\varepsilon)$ is $L^\infty$-negligible; and for all $L^2$-moderate regularisations $u_{0,\varepsilon},\,\Tilde{u}_{0,\varepsilon}$, $u_{1,\varepsilon},\,\Tilde{u}_{1,\varepsilon}$ such that $(u_{0,\varepsilon}-\Tilde{u}_{0,\varepsilon})$, $(u_{1,\varepsilon}-\Tilde{u}_{1,\varepsilon})$ are $L^2$-negligible, we have that $u_\varepsilon-\Tilde{u}_\varepsilon$ is $L^2$-negligible.
\end{defn}

\begin{thm}[Uniqueness of the very weak solution]\label{Th-U}
Let the coefficient $q$ and initial data $(u_0,\, u_1)$ be distributions in $(0,1)$. Then the very weak solution to the initial/boundary problem  \eqref{C.p1}-\eqref{C.p3} is unique.
\end{thm}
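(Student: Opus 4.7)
The plan is to set $W_\varepsilon(t,x) := u_\varepsilon(t,x) - \Tilde{u}_\varepsilon(t,x)$ and derive an equation for it, then apply the non-homogeneous energy estimate from Corollary \ref{cor2} to the difference. A short calculation from \eqref{un1} and \eqref{un2} gives
\begin{equation*}
\partial_t^2 W_\varepsilon - \partial_x^2 W_\varepsilon + q_\varepsilon W_\varepsilon = (\Tilde{q}_\varepsilon - q_\varepsilon)\Tilde{u}_\varepsilon =: f_\varepsilon,
\end{equation*}
together with the Dirichlet condition $W_\varepsilon(t,0)=0=W_\varepsilon(t,1)$ and initial data $W_\varepsilon(0,x) = u_{0,\varepsilon}-\Tilde{u}_{0,\varepsilon}$, $\partial_t W_\varepsilon(0,x) = u_{1,\varepsilon}-\Tilde{u}_{1,\varepsilon}$. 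So $W_\varepsilon$ solves a non-homogeneous wave problem of the form \eqref{nonh} with coefficient $q_\varepsilon$, source $f_\varepsilon$, and negligible initial data.

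Next, I apply the estimate \eqref{ec-nh1} of Corollary \ref{cor2} to $W_\varepsilon$; crucially the constant there is independent of $q$, $u_0$, $u_1$ and $f$, so it does not degrade with $\varepsilon$:
\begin{equation*}
\|W_\varepsilon(t,\cdot)\|_{L^2}^2 \lesssim \|u_{0,\varepsilon}-\Tilde{u}_{0,\varepsilon}\|_{L^2}^2 + \|u_{1,\varepsilon}-\Tilde{u}_{1,\varepsilon}\|_{L^2}^2 + 2T^2\|f_\varepsilon\|_{C([0,T],L^2(0,1))}^2.
\end{equation*}
By hypothesis the first two terms are $L^2$-negligible, i.e.\ bounded by $C_N\varepsilon^N$ for any $N \in \mathbb{N}$.

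The remaining step is to control the source term. Here I use
\begin{equation*}
\|f_\varepsilon(t,\cdot)\|_{L^2} \leq \|q_\varepsilon - \Tilde{q}_\varepsilon\|_{L^\infty}\,\|\Tilde{u}_\varepsilon(t,\cdot)\|_{L^2}.
\end{equation*}
By Definition \ref{D4}, $(q_\varepsilon - \Tilde{q}_\varepsilon)$ is $L^\infty$-negligible, so $\|q_\varepsilon-\Tilde{q}_\varepsilon\|_{L^\infty} \leq C_N\varepsilon^N$ for every $N$; by Definition \ref{D2} the net $(\Tilde{u}_\varepsilon)$ is $L^2$-moderate, so $\|\Tilde{u}_\varepsilon(t,\cdot)\|_{L^2}\leq C\varepsilon^{-M}$ for some fixed $M$, uniformly in $t\in[0,T]$. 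Hence $\|f_\varepsilon\|_{C([0,T],L^2(0,1))} \leq C_N\varepsilon^{N-M}$, and as $N$ is arbitrary, the source is itself negligible. Combining, $\|W_\varepsilon(t,\cdot)\|_{L^2}\leq C_N\varepsilon^N$ uniformly in $t\in[0,T]$ for all $N\in\mathbb{N}$, which is exactly the $L^2$-negligibility of $u_\varepsilon-\Tilde{u}_\varepsilon$.

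The only delicate point worth checking is the applicability of Corollary \ref{cor2} to $W_\varepsilon$: formally one needs the second spatial derivatives of the initial data in $L^2$, but since $u_{0,\varepsilon},\Tilde{u}_{0,\varepsilon},u_{1,\varepsilon},\Tilde{u}_{1,\varepsilon}$ arise from mollification they are smooth, and the estimate \eqref{ec-nh1} we actually use only involves $L^2$-norms of the initial data (no second derivatives) and has a constant independent of $q_\varepsilon$. So no loss of powers of $\varepsilon$ enters from the energy inequality itself; the entire gain comes from the negligibility of the differences of the regularising nets. This completes the proof of uniqueness.
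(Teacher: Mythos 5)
Your proposal is correct and follows essentially the same route as the paper: form the difference $U_\varepsilon=u_\varepsilon-\Tilde u_\varepsilon$, observe it solves the non-homogeneous problem with source $(\Tilde q_\varepsilon-q_\varepsilon)\Tilde u_\varepsilon$, apply the estimate \eqref{ec-nh1} of Corollary \ref{cor2}, and conclude by combining the negligibility of the data/coefficient differences with the moderateness of $\Tilde u_\varepsilon$. Your added remark on why only the $L^2$-norms of the (smooth, mollified) initial data enter the estimate actually used is a small point the paper leaves implicit, but it does not change the argument.
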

\begin{proof}
We denote by $u_\varepsilon$ and $\Tilde{u}_\varepsilon$ the families of solutions to the initial/boundary problems \eqref{un1} and \eqref{un2}, respectively. Setting $U_\varepsilon$ to be the difference of these nets $U_\varepsilon:=u_\varepsilon(t,\cdot)-\Tilde{u}_\varepsilon(t,\cdot)$, then $U_\varepsilon$ solves
    \begin{equation}\label{unq}
    \left\{\begin{array}{l}\partial^2_t U_\varepsilon(t,x)-\partial^2_x U_\varepsilon(t,x)+q_\varepsilon(x) U_\varepsilon(t,x)=f_\varepsilon(t,x),\quad (t,x)\in [0,T]\times(0,1),\\
 U_\varepsilon(0,x)=(u_{0,\varepsilon}-\Tilde{u}_{0,\varepsilon})(x),\,\,\, x\in (0,1), \\
\partial_t U_\varepsilon(0,x)=(u_{1,\varepsilon}-\Tilde{u}_{1,\varepsilon})(x), \,\,\, x\in (0,1),\\
U_\varepsilon(t,0)=0=U_\varepsilon(t,1),
\end{array}\right.
\end{equation}
where we set $f_\varepsilon(t,x):=(\Tilde{q}_\varepsilon(x)-q_\varepsilon(x))\Tilde{u}_\varepsilon(t,x)$ for the mass term to the non-homogeneous initial/boundary problem \eqref{unq}.

Passing to the $L^2$-norm of the $U_\varepsilon$, by using \eqref{ec-nh1} we obtain 
$$
\|U_\varepsilon(t,\cdot)\|^2_{L^2}\lesssim \|U_\varepsilon(0,\cdot)\|^2_{L^2}+\|\partial_tU_\varepsilon(0,\cdot)\|^2_{L^2}+2T^2\|f_\varepsilon\|^2_{C([0,T],L^2(0,1))}.
$$
Since
$$\|f_\varepsilon\|^2_{C([0,T],L^2(0,1))}\leq \|q_\varepsilon-\Tilde{q}_\varepsilon\|^2_{L^\infty}\|\Tilde{u}_\varepsilon\|^2_{C([0,T],L^2(0,1))},$$
and using initial data of \eqref{unq} we get
$$
\|U_\varepsilon(t,\cdot)\|^2_{L^2}\lesssim \|u_{0,\varepsilon}-\Tilde{u}_{0,\varepsilon}\|^2_{L^2}+\|u_{1,\varepsilon}-\Tilde{u}_{1,\varepsilon}\|^2_{L^2}+2T^2\|q_\varepsilon-\Tilde{q}_\varepsilon\|^2_{L^\infty}\|\Tilde{u}_\varepsilon\|^2_{C([0,T],L^2(0,1))}.
$$
Taking into account the negligibility of the nets $u_{0,\varepsilon}-\Tilde{u}_{0,\varepsilon}$, $u_{1,\varepsilon}-\Tilde{u}_{1,\varepsilon}$ and $q_\varepsilon-\Tilde{q}_\varepsilon$ we get
$$\|U_\varepsilon(t,\cdot)\|^2_{L^2}\leq C_1\varepsilon^{N_1}+C_2\varepsilon^{N_2}+C_3\varepsilon^{N_3}\varepsilon^{-N_4}$$
for some $C_1>0,\,C_2>0,\,C_3>0,\,N_4\in \mathbb{N}$ and all $N_1,\,N_2,\,N_3\in \mathbb{N}$, since $\Tilde{u}_\varepsilon$ is moderate. Then, for some $C_M>0$ and all $M\in \mathbb{N}$
$$\|U_\varepsilon(t,\cdot)\|_{L^2}\leq C_M \varepsilon^M.$$
The last estimate holds true uniformly in $t$ , and this completes the proof of Theorem \ref{Th-U}.
\end{proof}

\begin{thm}[Consistency]\label{Th-C} Assume that $q\in L^\infty(0,1)$, and let $(q_\varepsilon)$ be any $L^\infty$-regularisation of $q$, that is $\|q_\varepsilon-q\|_{L^\infty}\to 0$ as $\varepsilon\to 0$. Let the initial data satisfy $(u_0,\, u_1) \in L^2(0,1)\times L^2(0,1)$. Let $u$ be a very weak solution of the initial/boundary problem \eqref{C.p1}-\eqref{C.p3}. Then for any families $q_\varepsilon$, $u_{0,\varepsilon}$, $u_{1,\varepsilon}$ such that $\|u_{0}-u_{0,\varepsilon}\|_{L^2}\to 0$, $\|u_{1}-u_{1,\varepsilon}\|_{L^2}\to 0$, $\|q-q_{\varepsilon}\|_{L^\infty}\to 0$ as $\varepsilon\to 0$, any representative $(u_\varepsilon)$ of the very weak solution converges as 
$$\sup\limits_{0\leq t\leq T}\|u(t,\cdot)-u_\varepsilon(t,\cdot)\|_{L^2}\to 0$$ 
for $\varepsilon\to 0$ to the unique classical solution $u\in C([0,T];L^2(0,1))$ of the initial/boundary problem \eqref{C.p1}-\eqref{C.p3} given by Theorem \ref{th1}.
\end{thm}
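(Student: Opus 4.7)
The plan is to consider the difference $V_\varepsilon(t,x) := u(t,x) - u_\varepsilon(t,x)$, where $u$ denotes the classical solution furnished by Theorem \ref{th1} (applied with $k = -1$, which is legitimate since $L^2 \hookrightarrow W^{-1}_{\mathcal{L}}$ by virtue of $\lambda_n \geq \lambda_1 > 0$) and $u_\varepsilon$ is an arbitrary representative of the very weak solution satisfying \eqref{vw1}. Subtracting the two wave equations and rearranging the mass term via $qu - q_\varepsilon u_\varepsilon = q_\varepsilon V_\varepsilon + (q - q_\varepsilon) u$ produces the non-homogeneous Cauchy--Dirichlet problem
\begin{equation*}
    \partial_t^2 V_\varepsilon - \partial_x^2 V_\varepsilon + q_\varepsilon V_\varepsilon = (q_\varepsilon - q) u,
\end{equation*}
with initial data $V_\varepsilon(0,\cdot) = u_0 - u_{0,\varepsilon}$ and $\partial_t V_\varepsilon(0,\cdot) = u_1 - u_{1,\varepsilon}$, together with homogeneous Dirichlet boundary conditions.

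Next I would invoke the non-homogeneous $L^2$-estimate \eqref{es-nh1} of Theorem \ref{non-hom}, which is applicable because $\|q_\varepsilon - q\|_{L^\infty} \to 0$ forces $\|q_\varepsilon\|_{L^\infty}$ to be uniformly bounded and, crucially, the constant in that estimate is stated to be independent of the potential. This yields
\begin{equation*}
    \|V_\varepsilon(t, \cdot)\|_{L^2}^2 \lesssim \|u_0 - u_{0,\varepsilon}\|_{L^2}^2 + \|u_1 - u_{1,\varepsilon}\|_{W^{-1}_{\mathcal{L}}}^2 + 2T^2 \|(q_\varepsilon - q)\, u\|_{C([0,T], L^2(0,1))}^2,
\end{equation*}
uniformly in $t \in [0, T]$, with an implicit constant independent of $\varepsilon$. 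The first two terms tend to zero by the hypotheses (using $\|\cdot\|_{W^{-1}_{\mathcal{L}}} \lesssim \|\cdot\|_{L^2}$), and the third is controlled by the pointwise bound
\begin{equation*}
    \|(q_\varepsilon - q)\, u(t, \cdot)\|_{L^2} \leq \|q_\varepsilon - q\|_{L^\infty}\, \|u(t,\cdot)\|_{L^2},
\end{equation*}
where $\sup_{t \in [0,T]} \|u(t,\cdot)\|_{L^2}$ is finite by estimate \eqref{est1} of Theorem \ref{th1}. Taking the supremum over $t \in [0,T]$ and collecting the three vanishing contributions delivers the claimed uniform convergence $\sup_{0\leq t\leq T}\|u(t,\cdot)-u_\varepsilon(t,\cdot)\|_{L^2}\to 0$.

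The principal subtlety, and the only step where something could go wrong, is securing the uniformity of the constants with respect to $\varepsilon$; this is precisely the reason for isolating the difference equation with coefficient $q_\varepsilon$ (rather than $q$) on the left-hand side and for relying on the explicit $q$-independence of the constants in Theorems \ref{th1} and \ref{non-hom}. A minor technical point is that different representatives $u_\varepsilon$ and $\tilde u_\varepsilon$ of the very weak solution must all converge to the same classical limit $u$: this follows either by running the argument separately for each representative, or as a consequence of the uniqueness already established in Theorem \ref{Th-U} applied to the difference of the two regularised families.
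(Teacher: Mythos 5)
Your proposal is correct and follows essentially the same route as the paper: the paper likewise sets $V_\varepsilon = u - u_\varepsilon$, observes that it solves the non-homogeneous problem with coefficient $q_\varepsilon$ and source $f_\varepsilon=(q_\varepsilon-q)u$, and applies the $L^2$-estimate for the non-homogeneous equation together with $\|f_\varepsilon\|_{C([0,T],L^2)}\leq\|q_\varepsilon-q\|_{L^\infty}\|u\|_{C([0,T],L^2)}$ to conclude. Your explicit remarks on the $\varepsilon$-uniformity of the constants and on the independence of the limit from the choice of representative only make the argument more careful than the paper's.
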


\begin{proof}
For $u$ and for $u_\varepsilon$, as in our assumption, we introduce an auxiliary notation $V_\varepsilon(t, x):= u(t,x)-u_\varepsilon(t,x)$. Then the net $V_\varepsilon$ is a solution to the initial/boundary problem
\begin{equation}\label{51}
    \left\{\begin{array}{l}
        \partial^2_tV_\varepsilon(t,x)-\partial^2_xV_\varepsilon(t,x)+q_\varepsilon(x)V_\varepsilon(t,x)=f_\varepsilon(t,x),\\
        V_\varepsilon(0,x)=(u_0-u_{0,\varepsilon})(x),\quad x\in (0,1),\\
        \partial_tV_\varepsilon(0,x)=(u_1-u_{1,\varepsilon})(x),\quad x\in (0,1),\\
        V_\varepsilon(t,0)=0=V_\varepsilon(t,1), \quad t\in[0,T],
    \end{array}\right.
\end{equation}
where $f_\varepsilon(t,x)=(q_\varepsilon(x)-q(x))u(t,x)$. Analogously to Theorem \ref{Th-U} we have that
$$\|V_\varepsilon(t,\cdot)\|^2_{L^2}\lesssim  \|u_{0}-{u}_{0,\varepsilon}\|^2_{L^2}+\|u_{1}-{u}_{1,\varepsilon}\|^2_{L^2}+2T^2\|q_\varepsilon-q\|^2_{L^\infty}\|u\|^2_{C([0,T],L^2(0,1))}.$$
Since
$$\|u_{0}-{u}_{0,\varepsilon}\|_{L^2}\to 0,\quad \|u_{1}-{u}_{1,\varepsilon}\|_{L^2}\to 0,\quad \|q_\varepsilon-q\|_{L^\infty}\to 0$$
for $\varepsilon\to 0$ and $u$ is a very weak solution of the initial/boundary problem \eqref{C.p1}-\eqref{C.p3} we get
$$\|V_\varepsilon(t,\cdot)\|_{L^2}\to 0$$
for $\varepsilon\to 0$. This proves Theorem \ref{Th-C}.

\end{proof}

\end{document}